\newcommand\cyr{%
\renewcommand\rmdefault{wncyr}%
\renewcommand\sfdefault{wncyss}%
\renewcommand\encodingdefault{OT2}%
\normalfont
\selectfont}
\DeclareTextFontCommand{\textcyr}{\cyr}
\DeclareFontFamily{OT1}{rsfs}{}
\DeclareFontShape{OT1}{rsfs}{n}{it}{<-> rsfs10}{}
\DeclareMathAlphabet{\mathscr}{OT1}{rsfs}{n}{it}
\numberwithin{equation}{section}
\newtheorem{theorem}{Theorem}[section]
\newtheorem{lemma}[theorem]{Lemma}
\newtheorem{corollary}[theorem]{Corollary}
\theoremstyle{definition}
\newtheorem{definition}[theorem]{Definition}
\newtheorem{remark}[theorem]{Remark}
\theoremstyle{remark}
\newtheorem{example}[theorem]{Example}
\newtheorem{notation}[theorem]{Notation}
\newcommand{\depth}{\operatorname{depth}}
\newcommand{\fm}{\frak{m}}
\newcommand{\fq}{\frak{q}}
\begin{document}
\title[Power of a standard parameter ideal]{On the index of reducibility of powers\\ of a standard parameter ideal}

\author[N.T. Cuong]{Nguyen Tu Cuong}
\address{Institute of Mathematics, 18 Hoang Quoc Viet Road, 10307
Hanoi, Viet Nam}
\email{ntcuong@math.ac.vn}

\author[P.H. Quy]{Pham Hung Quy}
\address{Department of Mathematics, FPT University, Hoa Lac Hi-Tech Park, Ha Noi, Viet Nam}
\email{quyph@fpt.edu.vn}

\author[H.L. Truong]{Hoang Le Truong}
\address{Institute of Mathematics, 18 Hoang Quoc Viet Road, 10307
Hanoi, Viet Nam}
\email{hltruong@math.ac.vn}

\thanks{2010 {\em Mathematics Subject Classification\/}: 13H10, 13D45, 13A15, 13H15.\\
This work is partially supported by funds of Vietnam National Foundation for Science
and Technology Development (NAFOSTED) under grant numbers
101.04-2014.15 and 101.04-2014.25. }

\keywords{Index of reducibility, generalized Cohen-Macaulay, standard ideal, Local cohomology.}

%\subjclass{13}
%\subjclass[2000]{Primary 13-XX}
%\subjclass[2000]{Primary ; Secondary}
%\date{\today \, (\printtime)}
%\date{\today}

\begin{abstract} In this paper we study the index of reducibility of powers of a standard parameter ideal. An explicit formula is proved for the extremely case.  We apply  the main result to compute Hilbert polynomials of socle ideals of standard parameter ideals.
\end{abstract}

\maketitle

\section{Introduction}
Throughout this paper, let $(R, \fm)$ be a Noetherian local ring with the infinite residue field $\frak k = R/\fm$, and $M$ a finitely generalized $R$-module of dimension $d$. A submodule $N$ of $M$ is called an {\it irreducible submodule} if $N$ can not be written as an intersection of two properly larger submodules of $M$. The number of irreducible components of an irredundant irreducible decomposition of $N$, which is independent of the choice of the decomposition  by E. Noether \cite{N21}, is called the {\it index of reducibility} of $N$, and denoted by $\mathrm{ir}_M(N)$. For a parameter ideal $\fq$ of $M$, {\it the index of reducibility} of $\fq$
on $M$ is the index of reducibility of $\fq M$ and denoted by $\mathrm{ir}_M(\fq)$. We have $\mathrm{ir}_M(\fq) = \dim_{\frak k} \mathrm{Soc}(M/\fq M)$. In the case $M$ is Cohen-Macaulay, D.G. Northcott proved that $\mathrm{ir}_M(\fq)$ is an invariant of the module (cf. \cite{No57}) and it is called the {\it Cohen-Macaulay type} of $M$. More precisely, we have $\mathrm{ir}_M(\fq) = \dim_{\frak k}\mathrm{Soc}(H^d_{\fm}(M))$ for all parameter ideals $\fq$, where $H^i_{\fm}(M)$ is the $i$-th local cohomology module of $M$ with respect to the maximal ideal $\frak m$. After that several authors tried to extend Northcott's result for other classes of modules, such as S. Goto, N. Suzuki and H. Sakurai for Buchsbaum modules in \cite{GS03, GS84}; and the authors for generalized Cohen-Macaulay modules in \cite{CQ11, CT08} (see also \cite{GQ16,Q12,Q13,Tr13,Tr16,TY16} for other extensions). If $M$ is a generalized Cohen-Macaulay module and $\fq$ is a {\it standard} parameter ideal of $M$, then Goto and Suzuki in \cite[Theorem 2.1]{GS84} showed that
    $$\mathrm{ir}_M(\frak q) \le \sum_{i=0}^d \binom{d}{i} \dim_{\frak k} \mathrm{Soc}(H^i_{\fm}(M)). \quad (\star)$$
In \cite[Corollary 4.3]{CQ11} and \cite[Theorem 1.1]{CT08} the authors proved that if the  parameter ideal $\frak q$ is contained in a large enough power of $\frak m$ we have
$$\mathrm{ir}_M(\frak q) = \sum_{i=0}^d \binom{d}{i} \dim_{\frak k} \mathrm{Soc}(H^i_{\fm}(M)). \quad (\star \star)$$
On the other hand, for each ideal $I$ the authors in \cite{CQT15} proved that the function $\mathrm{ir}_M(I^{n+1}M)$ becomes a polynomial for large enough $n$. In particular, suppose that $M$ is Cohen-Macaulay and $\fq$ is a parameter ideal, then we have (cf. \cite[Theorem 5.2]{CQT15})
$$\mathrm{ir}_M({\frak q}^{n+1}M)=\binom{n+d-1}{d-1}\dim_{\frak k} \mathrm{Soc}(H^d_{\fm}(M))$$
for all $n \geq 0$. The aim of this paper is to extend this result for the case $M$ is generalized Cohen-Macaulay and $\fq$ is standard. Firstly, similar to the inquality $(\star)$ we have the following result.

\begin{theorem}\label{ThA} Let $M$ be a generalized Cohen-Macaulay module of dimension $d>0$ and $\frak q = (x_1, ..., x_d)$ a standard parameter ideal. Set $s_i(M) = \dim_{\frak k}\mathrm{Soc}(H^i_{\frak m}(M))$ for all $i = 0, ..., d$. Then
 $$\mathrm{ir}_M(\frak q^{n+1}M) \le \sum_{i=1}^{d} \binom{n+d-i}{d-i} \big( \sum_{j=1}^{d-i+1} \binom{d-i}{j-1} s_j(M)\big) + s_0(M)$$
for all $n \ge 0$.
\end{theorem}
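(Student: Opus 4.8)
The plan is to prove the bound by a double induction — an outer induction on $d$ and an inner one on $n$ — carried out entirely with socle dimensions, since $\mathrm{ir}_M(\fq^{n+1}M)=\dim_{\frak k}\mathrm{Soc}(M/\fq^{n+1}M)$. I would first reduce to the case $\depth M>0$. Writing $\overline M=M/H^0_{\fm}(M)$, one uses the standard-ideal properties $\fq H^0_{\fm}(M)=0$ and $H^0_{\fm}(M)\cap\fq^{n+1}M=0$ (together with filter-regularity of $x_1,\dots,x_d$) to get a short exact sequence $0\to H^0_{\fm}(M)\to M/\fq^{n+1}M\to\overline M/\fq^{n+1}\overline M\to0$; applying $\Hom_R(\frak k,-)$ and using $s_0(\overline M)=0$ and $s_i(\overline M)=s_i(M)$ for $i\ge 1$, this absorbs exactly the summand $s_0(M)$ and leaves the assertion for $\overline M$, which is again generalized Cohen--Macaulay of dimension $d$ but now with positive depth. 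So from now on $\depth M>0$, whence $x_1$ is $M$-regular and $\fq^{n+1}M:_Mx_1=\fq^{n}M$ for every $n$ (again a property of standard ideals).

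For $d=1$ the reduced module $M$ is Cohen--Macaulay of dimension $1$, and \cite[Theorem 5.2]{CQT15} gives $\mathrm{ir}_M(\fq^{n+1}M)=s_1(M)$, the claimed value. For $n=0$ and arbitrary $d$ the bound is exactly $(\star)$: collecting the coefficient of each $s_j(M)$ and using the hockey-stick identity $\sum_{i=1}^{d-j+1}\binom{d-i}{j-1}=\binom dj$, the right-hand side at $n=0$ equals $\sum_{i=0}^d\binom di s_i(M)$. For the inductive step ($d\ge2$, $n\ge1$) I would set $N=M/x_1M$, a generalized Cohen--Macaulay module of dimension $d-1$ with $\overline{\fq}$ the standard parameter ideal generated by the images of $x_2,\dots,x_d$, and use that multiplication by $x_1$ produces
$$0\to M/\fq^{n}M\xrightarrow{\;x_1\;} M/\fq^{n+1}M\to N/\overline{\fq}^{\,n+1}N\to 0 .$$
Applying $\Hom_R(\frak k,-)$ yields the \emph{exact} equality $\dim_{\frak k}\mathrm{Soc}(M/\fq^{n+1}M)=\mathrm{ir}_M(\fq^{n}M)+\dim_{\frak k}\im\rho$, where $\rho\colon\mathrm{Soc}(M/\fq^{n+1}M)\to\mathrm{Soc}(N/\overline{\fq}^{\,n+1}N)$ is the induced map.

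The crux will be the claim that $\im\rho$ meets the submodule $\mathrm{Soc}(H^0_{\fm}(N))\subseteq\mathrm{Soc}(N/\overline{\fq}^{\,n+1}N)$ — the latter inclusion coming from $0\to H^0_{\fm}(N)\to N/\overline{\fq}^{\,n+1}N\to\overline N/\overline{\fq}^{\,n+1}\overline N\to0$ with $\overline N=N/H^0_{\fm}(N)$ — only in $0$; granting this, $\dim_{\frak k}\im\rho\le\dim_{\frak k}\mathrm{Soc}(\overline N/\overline{\fq}^{\,n+1}\overline N)=\mathrm{ir}_{\overline N}(\overline{\fq}^{\,n+1}\overline N)$. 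To prove the claim I would take a nonzero element of the intersection, lift it to $s\in M$ with $\fm s\subseteq\fq^{n+1}M$; since $x_1\in\fm$ this forces $s\in\fq^{n+1}M:_Mx_1=\fq^{n}M$, so the image of $s$ in $N$ lies in $\overline{\fq}^{\,n}N\cap H^0_{\fm}(N)$, which is $0$ for $n\ge1$ by the same standard-ideal property used in the first reduction, applied now to $N$ — a contradiction. Finally, feeding into $\dim_{\frak k}\mathrm{Soc}(M/\fq^{n+1}M)\le\mathrm{ir}_M(\fq^{n}M)+\mathrm{ir}_{\overline N}(\overline{\fq}^{\,n+1}\overline N)$ the inner hypothesis at $(M,n-1)$ and the outer hypothesis at the $(d-1)$-dimensional $\overline N$ — using $s_0(\overline N)=0$ and $s_j(\overline N)=s_j(N)\le s_j(M)+s_{j+1}(M)$ for $j\ge1$, which follow from the long exact cohomology sequence of $0\to M\xrightarrow{x_1}M\to N\to0$ together with $\fq H^i_{\fm}(M)=0$ for $i<d$ — a term-by-term comparison using Pascal's rule $\binom{n+k}{k}=\binom{n-1+k}{k}+\binom{n-1+k}{k-1}$ shows that the resulting estimate is precisely the right-hand side for $(M,n)$. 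The genuinely delicate point is thus the boxed claim about $\im\rho$ — equivalently, that the part of $\mathrm{Soc}(N/\overline{\fq}^{\,n+1}N)$ coming from $H^0_{\fm}(N)\cong H^1_{\fm}(M)$ never lifts to a socle element of $M/\fq^{n+1}M$; everything else is bookkeeping with the standard-ideal properties of $\fq$ and with the cohomology sequence for cutting by $x_1$.
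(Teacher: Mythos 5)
Your proposal is correct and follows essentially the same route as the paper's proof: reduce modulo $H^0_{\fm}(M)$ to positive depth, induct on $d$ via the exact sequence $0\to M/\fq^{n}M\xrightarrow{x_1} M/\fq^{n+1}M\to M'/(\fq')^{n+1}M'\to 0$ coming from $\fq^{n+1}M:_Mx_1=\fq^{n}M$, and close with the same hockey-stick/Pascal combinatorics (your inner induction on $n$ is just the paper's telescoping sum). Your key claim that $\im\rho$ meets $\mathrm{Soc}(H^0_{\fm}(M'))$ only in $0$ is a repackaging of the paper's bound $\ell(\operatorname{coker}\varphi_n)\ge s_1(M)$ in Lemma \ref{L3.3} --- both rest on $\fq^{n+1}M:_M\fm\subseteq\fq^{n+1}M:_Mx_1=\fq^{n}M$ together with Lemma \ref{L2.2}(ii) (equivalently $H^0_{\fm}(M')\cap\fq'M'=0$) --- so the two arguments coincide in substance.
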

Then, when the standard parameter ideal $\fq$ satisfies the extremely condition $(\star \star)$ we have
\begin{theorem}\label{ThB}
  Let $M$ be a generalized Cohen-Macaulay module of dimension
$d>0$ and $\frak q = (x_1, ...,x_d)$ be a standard parameter ideal of $M$. Suppose that $\mathrm{ir}_M(\frak q) = \sum_{i=0}^d \binom{d}{i}s_i(M)$, where $s_i(M) = \dim_{\frak k}\mathrm{Soc}(H^i_{\frak m}(M))$ for all $i = 0, ..., d$. Then
 $$\mathrm{ir}_M(\frak q^{n+1}M) = \sum_{i=1}^{d} \binom{n+d-i}{d-i} \big( \sum_{j=1}^{d-i+1} \binom{d-i}{j-1} s_j(M)\big) + s_0(M)$$
for all $n \ge 0$.
\end{theorem}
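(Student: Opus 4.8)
The plan is to deduce the statement from Theorem~\ref{ThA}, which already supplies the inequality ``$\le$'', by establishing the reverse inequality through a double induction on $d=\dim M$ and on $n$. A preliminary reformulation makes the induction transparent: using $\sum_{l=0}^{N}\binom{r+l}{l}=\binom{r+N+1}{N}$ one checks
$$\sum_{i=1}^{d}\binom{n+d-i}{d-i}\Big(\sum_{j=1}^{d-i+1}\binom{d-i}{j-1}s_j(M)\Big)+s_0(M)=s_0(M)+\sum_{j=1}^{d}\binom{n+j-1}{j-1}\binom{n+d}{d-j}s_j(M)=:g_M(n),$$
so it suffices to prove $\mathrm{ir}_M(\fq^{n+1}M)=g_M(n)$ for all $n\ge 0$. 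Since $g_M(0)=\sum_{i=0}^{d}\binom{d}{i}s_i(M)$, the case $n=0$ is exactly the hypothesis, and for $M$ Cohen--Macaulay $g_M$ specializes to \cite[Theorem~5.2]{CQT15}.

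\emph{Step 1: reduction to $\depth M>0$.} Because $\fq$ is standard, $\fq H^0_{\fm}(M)=0$ and $0:_M x_1=H^0_{\fm}(M)$ (the latter has finite length since the nonmaximal associated primes of a generalized Cohen--Macaulay module have coheight $d$). Hence $H^0_{\fm}(M)\cap\fq^{n+1}M=0$ and there is a short exact sequence
$$0\longrightarrow H^0_{\fm}(M)\longrightarrow M/\fq^{n+1}M\longrightarrow \overline{M}/\fq^{n+1}\overline{M}\longrightarrow 0,\qquad \overline{M}:=M/H^0_{\fm}(M),$$
where $\overline{M}$ is generalized Cohen--Macaulay with $s_0(\overline{M})=0$, $s_j(\overline{M})=s_j(M)$ for $j\ge 1$, and $g_M(n)=s_0(M)+g_{\overline{M}}(n)$. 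Comparing the long exact sequence of socles with the bound $(\star)$ for $\overline{M}$, the hypothesis for $M$ forces the hypothesis for $\overline{M}$ and (at $n=0$, then propagating along the induction) forces the sequence above to split on socles; so one may assume $s_0(M)=0$, i.e. $x_1$ is a nonzerodivisor on $M$.

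\emph{Step 2: induction on $d$.} Standardness of $\fq$ (that $x_1,\dots,x_d$ is a $d$-sequence) gives $\fq^{n+1}M:_M x_1=\fq^{n}M$, hence
$$0\longrightarrow M/\fq^{n}M\xrightarrow{\ x_1\ }M/\fq^{n+1}M\longrightarrow M_1/\overline{\fq}^{\,n+1}M_1\longrightarrow 0,\qquad M_1:=M/x_1M,\ \ \overline{\fq}:=(x_2,\dots,x_d),$$
so $\mathrm{ir}_M(\fq^{n+1}M)=\mathrm{ir}_M(\fq^{n}M)+\dim_{\frak k}\im\pi_n$ with $\pi_n\colon\mathrm{Soc}(M/\fq^{n+1}M)\to\mathrm{Soc}(M_1/\overline{\fq}^{\,n+1}M_1)$. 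Now $M_1$ is generalized Cohen--Macaulay of dimension $d-1$, $\overline{\fq}$ is standard for $M_1$, the cohomology sequences of $0\to M\xrightarrow{x_1}M\to M_1\to0$ give $s_i(M_1)=s_i(M)+s_{i+1}(M)$, and—since $M/\fq M\cong M_1/\overline{\fq}M_1$ and again by $(\star)$—the hypothesis passes from $M$ to $M_1$. By the inductive hypothesis (the theorem for $M_1$, and for $M$ at level $n-1$), $\mathrm{ir}_M(\fq^{n}M)=g_M(n-1)$ and $\mathrm{ir}_{M_1}(\overline{\fq}^{\,n+1}M_1)=g_{M_1}(n)$, and a binomial computation gives $g_{M_1}(n)-\bigl(g_M(n)-g_M(n-1)\bigr)=s_0(M_1)$. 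Thus the theorem is equivalent to the claim that for $n\ge1$
$$\dim_{\frak k}\coker\pi_n=\dim_{\frak k}\mathrm{Soc}\bigl(H^0_{\fm}(M_1)\bigr);$$
Theorem~\ref{ThA} already yields ``$\ge$'' here, so only ``$\le$'' remains.

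\emph{Step 3 (the crux).} It remains to show that every socle element of $M_1/\overline{\fq}^{\,n+1}M_1$ becomes, modulo $H^0_{\fm}(M_1)$, the image under $\pi_n$ of a socle element of $M/\fq^{n+1}M$. I would read this off the connecting homomorphism $\mathrm{Soc}(M_1/\overline{\fq}^{\,n+1}M_1)\to\Ext^1_R(\frak k,M/\fq^{n}M)$: lifting a socle element $\overline{s}$ to $\widetilde{s}\in M$ one has $y\widetilde{s}=x_1a_y+c_y$ with $c_y\in\fq^{n+1}M$, and $\overline{s}$ lifts precisely when $y\mapsto[a_y]\in M/\fq^{n}M$ represents a coboundary. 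Using that $\fq$ annihilates $H^i_{\fm}(M)$ for $i<d$, that $H^0_{\fm}(M_1)\cong H^1_{\fm}(M)$ is $\overline{\fq}$-torsion, that $\overline{\fq}^{\,k}M_1\cap H^0_{\fm}(M_1)=0$ for $k\ge1$, and the colon relations of a $d$-sequence, this obstruction should detect only the degree-zero summand $H^0_{\fm}(M_1)$ of $\mathrm{Soc}(M_1/\overline{\fq}^{\,n+1}M_1)$. To make ``degree'' rigorous one will likely have to carry along in the induction a structural description of $\mathrm{Soc}(M/\fq^{n+1}M)$ recording the separate contributions of the $H^j_{\fm}(M)$, not merely its dimension. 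This control of the connecting map and of the socle structure is the main obstacle; once it is established, the remainder is formal bookkeeping, and the base cases $n=0$ (the hypothesis) and $d=1$ (where, after Step~1, $M$ is Cohen--Macaulay of dimension $1$, covered by \cite{CQT15}) close the induction.
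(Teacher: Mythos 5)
Your Steps 1 and 2 are sound and run parallel to the paper's own reductions (Lemmas \ref{L3.1}, \ref{L4.1}, \ref{L4.2} and the exact sequence argument of Lemma \ref{L4.3}): after passing to $\overline{M}=M/H^0_{\fm}(M)$ and factoring out $x_1$, the theorem is indeed equivalent, by induction on $d$ and on $n$, to the statement that $\dim_{\frak k}\coker\pi_n = s_0(M)+s_1(M)$ for all $n\ge 1$, i.e.\ that the inequality of Lemma \ref{L4.3} is an equality, which by Remark \ref{R4.4} amounts to $(\fq^{n+1}M+x_1M):_M\fm=(\fq^{n+1}M:_M\fm)+(x_1M:_M\fm)$.

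However, your Step 3 --- which is precisely this remaining equality --- is a genuine gap, and you say so yourself (``should detect'', ``one will likely have to carry along \dots a structural description of $\mathrm{Soc}(M/\fq^{n+1}M)$''). The sketched obstruction argument via the connecting map to $\Ext^1_R(\frak k, M/\fq^nM)$ is not a proof: $\mathrm{Soc}(M_1/\overline{\fq}^{\,n+1}M_1)$ has no canonical ``degree'' decomposition indexed by the modules $H^j_{\fm}$, and making such a decomposition rigorous is exactly the difficulty you have deferred. The paper closes this gap by a concrete colon-ideal identity rather than by socle structure: the key Lemma \ref{L4.5} shows that whenever the asserted formula holds for all $n$, one has $\fq^{n+1}M:_M\fm=\fq^{n}(\fq M:_M\fm)+(0:_M\fm)$ (proved by a double induction on $d$ and $n$, using the Claim $(x_1M:_M\fm)\cap(\fq^{n+1}M:_M\fm)\subseteq x_1M+(0:_M\fm)$ from Lemma \ref{L4.3} and the $d$-sequence colon relations of Lemma \ref{L2.2}). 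In the proof of Theorem \ref{ThB} this lemma is applied not to $M$ but to $M'=M/x_1M$, where the formula is already available from the induction on $d$; translated back it gives $(x_1M+\fq^{n+1}M):_M\fm=\fq^{n}(\fq M:_M\fm)+(x_1M:_M\fm)\subseteq(\fq^{n+1}M:_M\fm)+(x_1M:_M\fm)$, which forces the equality required by Remark \ref{R4.4}, and the rest is the combinatorial bookkeeping you describe. Some device of this kind (or a proof of the colon identity itself) is needed to complete your argument; as written, the crucial inequality $\dim_{\frak k}\coker\pi_n\le s_0(M)+s_1(M)$ is not established.
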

It should be noted here that the technical arguments that used in this paper are mainly based on technical properties of standard system of parameters and colon ideals. For example, the key ingredient of the proof of Theorem \ref{ThB} is proving that the equality $\frak q^{n+1}M :_R \frak  m  = \frak q^{n} (\frak qM :_R \frak m)
+ (0 :_M \frak  m)$ holds true for all $n \geq 0$ (Lemma \ref{L4.5}).\\

This paper is organized as follows. In the next section we recall the notions of generalized Cohen-Macaulay, standard parameter ideal and the index of reducibility. Theorem \ref{ThA} is proved in Section 3. Section 4 is devolved to prove Theorem \ref{ThB}. In the last section, we apply Theorem \ref{ThB} to give an explicit description for the Hilbert polynomials of socle ideals (cf. Theorem \ref{T4.10}).

\section{Preliminary}
We start this section with the notions of generalized Cohen-Macaulay and standard parameter ideals in terms of local cohomology (cf. \cite{CST78,T86}).
\begin{definition}\rm
\begin{enumerate}[{(i)}]
\item An $R$-module $M$ is called a {\it generalized Cohen-Macaulay} module if $H^i_{\frak m}(M)$ has finite length for all $i<d$.
\item A parameter ideal $\frak q = (x_1, ..., x_d)$ is called {\it standard} if
$$\frak q H^i_{\frak m}(M/(x_1,...,x_j)M) = 0$$
for all non-negative integers $i, j$ with $i +j <d$.
\item An $R$-module $M$ is called {\it Buchsbaum} if every parameter ideal is standard.
\end{enumerate}
\end{definition}
Notice that a standard system of parameters forms a $d$-sequence. The following result is useful in this paper (see \cite[Corollary 2.6]{T86}).
\begin{lemma} \label{L2.2} Let $\frak q = (x_1, ..., x_d)$ be a standard parameter ideal of $M$. Set $\frak q_i = (x_1, ..., x_i)$ for all $i = 0, ..., d-1$. Then we have
  \begin{enumerate}[{(i)}] \rm
  \item {\it $(\frak q^{n+1}, \frak q_{i-1})M : x_i = \frak q^nM + (\frak q_{i-1}M : x_i)$ {\it for all } $n>0$, $i = 1, .., d$.}
  \item {\it $(\frak q_{i-1}M : x_i) \cap \frak qM =  \frak q_{i-1}M$ {\it for all } $i = 1, .., d$.}
  \end{enumerate}
\end{lemma}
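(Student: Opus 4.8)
The plan is to handle the two assertions separately: part~(ii) is essentially the defining property of a $d$-sequence and costs almost nothing once we use the remark, recorded just above, that a standard system of parameters is a $d$-sequence. For part~(ii), recall Huneke's equivalent form of the $d$-sequence condition: $x_1,\dots,x_d$ is a $d$-sequence on $M$ if and only if
$$\big((x_1,\dots,x_{i-1})M :_M x_i\big)\cap (x_1,\dots,x_d)M=(x_1,\dots,x_{i-1})M$$
for every $i$. In the notation of the lemma this is exactly $(\frak q_{i-1}M:_M x_i)\cap \frak q M=\frak q_{i-1}M$, so part~(ii) follows at once from the standardness of $\frak q$. (One may also argue directly: $\supseteq$ is trivial, and for $\subseteq$ one feeds an element of $\frak q M$ carried into $\frak q_{i-1}M$ by $x_i$ into the relations $\frak q_{i-1}M:_M x_ix_j=\frak q_{i-1}M:_M x_j$.)

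Turning to part~(i), the inclusion $\supseteq$ is routine: $m\in\frak q^nM$ gives $x_im\in\frak q^{n+1}M$, and $m\in\frak q_{i-1}M:_M x_i$ gives $x_im\in\frak q_{i-1}M$, so in either case $m$ lies in the left-hand side. For the reverse inclusion I would first reduce to $i=1$ by passing to $\overline M=M/\frak q_{i-1}M$, on which $x_i,\dots,x_d$ is again a standard system of parameters. Writing $\pi\colon M\to\overline M$, the left-hand side is the full preimage $\pi^{-1}(\frak q^{n+1}\overline M:_{\overline M}x_i)$, while the right-hand side is the full preimage $\pi^{-1}\big(\frak q^{n}\overline M+(0:_{\overline M}x_i)\big)$; since both sides contain $\ker\pi=\frak q_{i-1}M$, the identity for general $i$ is equivalent to
$$\frak q^{n+1}\overline M:_{\overline M}x_i=\frak q^{n}\overline M+(0:_{\overline M}x_i),$$
that is, to the case where $x_i$ is the first element of a standard system of parameters.

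It therefore remains to show, for a standard system of parameters with first element $x:=x_1$ and for $n\ge 1$, that $xm\in\frak q^{n+1}M$ forces $m\in\frak q^nM+(0:_M x)$. I would induct on $n$, the case $n=0$ being trivial as both sides are $M$. In the step, $xm\in\frak q^{n+1}M\subseteq\frak q^nM$ together with the inductive hypothesis writes $m=m'+m_0$ with $m'\in\frak q^{n-1}M$ and $m_0\in 0:_M x$; as $xm_0=0$ we are reduced to the sharper claim that $m'\in\frak q^{n-1}M$ and $xm'\in\frak q^{n+1}M$ imply $m'\in\frak q^nM+(0:_M x)$. Passing to $\gr_{\frak q}(M)=\bigoplus_{k\ge0}\frak q^kM/\frak q^{k+1}M$, this says precisely that the annihilator of the leading form $x^{*}\in[\gr_{\frak q}(R)]_1$ is no larger than expected: $(0:_{\gr_{\frak q}(M)}x^{*})_{n-1}$ is the image of $(0:_M x)\cap\frak q^{n-1}M$.

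This control of $0:_{\gr_{\frak q}(M)}x^{*}$ is the main obstacle; everything before it is bookkeeping. Unwinding it concretely, one writes $xm'=\sum_{j=1}^{d}x_jv_j$ with $v_j\in\frak q^nM$ and absorbs the $j=1$ summand into $\frak q^nM$, reducing to the following: if $w\in\frak q^{n-1}M$ with $xw\in(x_2,\dots,x_d)\frak q^nM$, then $w\in\frak q^nM+(0:_M x)$. Here the $d$-sequence relations for the reordered standard system $x_2,\dots,x_d,x_1$ (a standard system of parameters is permutable, hence a $d$-sequence in every order) must be combined with the filtration $\{\frak q^kM\}$ to move $w$ into $\frak q^nM$ modulo $0:_M x$. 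I expect justifying this last move---equivalently, showing that passing to higher powers of $\frak q$ creates no new annihilators of $x$ beyond $0:_M x$---to be the genuinely delicate point, and it is exactly where the standardness hypothesis (not merely $M$ being generalized Cohen--Macaulay) is indispensable.
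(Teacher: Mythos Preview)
The paper does not give a proof of this lemma at all: it is quoted verbatim from Trung's paper \cite{T86} (Corollary~2.6 there), as stated in the sentence immediately preceding the lemma. So there is no ``paper's own proof'' to compare against; I can only evaluate your argument on its merits.

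Your treatment of part~(ii) is correct and complete: the identity $(\frak q_{i-1}M:_M x_i)\cap\frak qM=\frak q_{i-1}M$ is indeed one of Huneke's equivalent forms of the $d$-sequence condition, and the paper itself records that standard systems of parameters are $d$-sequences.

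For part~(i), your reduction to the case $i=1$ by passing to $M/\frak q_{i-1}M$ is fine (the quotient sequence $x_i,\dots,x_d$ is again a $d$-sequence on the quotient, which is all you need), and the inclusion $\supseteq$ is trivial as you say. But you leave a genuine gap at the crucial point: you explicitly do not prove that $m'\in\frak q^{n-1}M$ with $x_1m'\in\frak q^{n+1}M$ forces $m'\in\frak q^nM+(0:_Mx_1)$, only identify it as ``the genuinely delicate point''. A proof proposal that ends by naming the one nontrivial step and saying you expect it to be hard is not a proof.

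Moreover, your diagnosis of what is needed is off. You write that this is ``exactly where the standardness hypothesis (not merely $M$ being generalized Cohen--Macaulay) is indispensable''. In fact the $d$-sequence property alone already gives~(i): this is one of the basic structural facts about $d$-sequences established by Huneke and Trung (for instance, for a $d$-sequence one has $((x_1,\dots,x_{i-1})M:_Mx_i)\cap\frak q^kM=(x_1,\dots,x_{i-1})\frak q^{k-1}M$ for all $k\ge 1$, and part~(i) unwinds from this by a short induction). So no strength beyond what you already invoked for part~(ii) is required, and a complete argument is available in the cited reference \cite{T86} rather than needing to be invented from scratch.
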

We now present the main object of this paper.
\begin{definition}\rm A submodule $N$ of $M$ is called an {\it irreducible submodule} if $N$ can not be written as an intersection of two properly larger submodules of $M$. The number of irreducible components of an irredundant irreducible decomposition of $N$, which is independent of the choice of the decomposition, is called the {\it index of reducibility} of $N$, and denoted by $\mathrm{ir}_M(N)$. For a parameter ideal  $\frak q$ of $M$, we define the index of reducibility of $\frak q$ on $M$ is the index of reducibility of $\frak qM$, and denoted it by $\mathrm{ir}_M(\frak q)$.
\end{definition}
\begin{remark}\rm
We denoted by $\mathrm{Soc}(M)$ the sum of all simple submodules of $M$. $\mathrm{Soc}(M)$ is called the socle of $M$. If $R$ is a local ring with the unique maximal ideal $\frak m$ and $k = R/\frak m$ its residue field, then it is well-known that $\mathrm{Soc}(M) = 0:_M\frak m$ is a $k$-vector space of finite dimension. Let $N$ be a submodule of $M$ with $\ell_R(M/N) < \infty$. Then it is easy to check that $\mathrm{ir}_M(N) = \ell_R((N:\frak m)/N) = \dim_{\frak k} \mathrm{Soc}(M/N).$
\end{remark}
\begin{notation}\rm In this paper, for each $i = 0, ..., d$, we set $s_i(M) = \dim_{\frak k} \mathrm{Soc}(H^i_{\frak m}(M))$.
\end{notation}

\begin{remark}\rm \label{R2.7}
\begin{enumerate}[{(i)}]
\item If $M$ is Cohen-Macaulay, a well-known result of Northcott said that the index of reducibility of $\frak q$ on $M$ is an invariant of the module, More precisely, $\mathrm{ir}_M(\frak q) = s_d(M)$ for all parameter ideals $\frak q$.
\item This Northcott's result was considered by many authors in larger classes of modules (see, \cite{CQ11, CT08, GS03,GS84,Q12,Q13,Tr13}). Recently, it is extended for any finite generated $R$-module in \cite{GQ16}. In the case $M$ is generalized Cohen-Macaulay, Goto and Suzuki in \cite[Theorem 2.1]{GS84} showed that
    $$\mathrm{ir}_M(\frak q) \le \sum_{i=0}^d s_i(M) \binom{d}{i}$$
    for every standard parameter ideal $\frak q$ of $M$. In \cite[Corollary 4.3]{CQ11} and \cite[Theorem 1.1]{CT08} the authors proved that the equality occurs when $\frak q$ contained in a lager enough power of $\frak m$.
\end{enumerate}
\end{remark}
For each ideal $I$, it is natural to ask about the behavior of the function $\mathrm{ir}_M(I^{n+1}M)$ in terms of $n$. In \cite[Theorems 4.1]{CQT15} the authors proved the following theorem (see also \cite{Tr14, Tr15}).

\begin{theorem}\label{T2.8} Let $(R, \fm)$ be a Notherian local ring and $M$ a finitely generated $R$-module of dimension $d$. For each ideal $I$ the function $\mathrm{ir}_M(I^{n+1}M)$ becomes a polynomial for large enough $n$. Furthermore, if $I$ is an ideal such that $\ell(M/IM) < \infty$, then the polynomial has degree $d-1$ and written as follows
  $$\mathrm{ir}_M(I^{n+1}M)=\sum\limits_{i=0}^{d-1} (-1)^{i}f_i(I;M)\binom{n+d-i-1}{d-i-1}$$
for large enough $n$ and $f_i(I;M) \in \mathbb{Z}$ for all $i = 0, \ldots, d-1$.
\end{theorem}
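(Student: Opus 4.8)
The plan is to turn everything into a statement about socles of graded modules over the Rees algebra and the fibre cone of $I$. Throughout I write $\mathrm{ir}_M(I^{n+1}M)=\dim_{\frak k}\mathrm{Soc}(M/I^{n+1}M)=\dim_{\frak k}\Hom_R(\frak k,M/I^{n+1}M)$, and I may assume $I\subsetneq R$, i.e.\ $I\subseteq\fm$. For the second ("furthermore") assertion I additionally replace $R$ by $R/\Ann_R M$, which changes neither $\mathrm{ir}_M(I^{n+1}M)$ nor $d$ nor $\depth M$, and arranges that $I$ is $\fm$-primary, $M$ is faithful and $\dim R=d$.

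\emph{Polynomiality.} Let $\mathcal R=R[It]=\bigoplus_{n\ge0}I^n$ be the Rees algebra and $\mathcal F=\mathcal R/\fm\mathcal R=\bigoplus_{n\ge0}I^n/\fm I^n$ its fibre cone, a standard graded $\frak k$-algebra. The graded $\mathcal R$-modules $\mathcal A=\bigoplus_{n\ge0}I^{n+1}M$ and $\mathcal C=\bigoplus_{n\ge0}M$ (with $It$ acting by multiplication) form a short exact sequence $0\to\mathcal A\to\mathcal C\to\bigoplus_{n\ge0}M/I^{n+1}M\to0$, in which $\mathcal A$ is finitely generated over $\mathcal R$ (it is generated in degree $0$), although the other two modules are not. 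For any finitely generated graded $\mathcal R$-module $\mathcal N$, each $\Ext^i_R(\frak k,\mathcal N)$ is again finitely generated and graded over $\mathcal R$ (compute it from a resolution of $\frak k$ by finite free $R$-modules: the differentials are $\mathcal R$-linear and $\mathcal R$ is Noetherian), and it is annihilated by $\fm$, hence finitely generated over $\mathcal F$. Applying $\Ext^\bullet_R(\frak k,-)$ to the sequence and looking at the degree-$0$ stretch $\Ext^0_R(\frak k,\mathcal A)\to\bigoplus_n\mathrm{Soc}(M)\to\bigoplus_n\mathrm{Soc}(M/I^{n+1}M)\to\Ext^1_R(\frak k,\mathcal A)$: the positive part of $\mathcal F$ acts on $\mathcal C$ through $I\subseteq\fm$, hence by zero on $\bigoplus_n\mathrm{Soc}(M)$; and $\Ext^0_R(\frak k,\mathcal A)_n=\mathrm{Soc}(I^{n+1}M)=\mathrm{Soc}(M)\cap I^{n+1}M=0$ for $n\gg0$ by Artin--Rees. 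Hence, for $n\gg0$, $\mathrm{ir}_M(I^{n+1}M)=s_0(M)+\dim_{\frak k}V_n$, where $V$ is the image of $\bigoplus_n\mathrm{Soc}(M/I^{n+1}M)$ in $\Ext^1_R(\frak k,\mathcal A)$, a finitely generated graded $\mathcal F$-module; so $\dim_{\frak k}V_n$, and with it $\mathrm{ir}_M(I^{n+1}M)$, is eventually polynomial.

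\emph{Degree $\le d-1$.} Assume now $\ell(M/IM)<\infty$. Since $\frak k$ is infinite, a general element $x\in I$ is a parameter for $M$, so $\dim(M/xM)=d-1$. Every element of $\mathrm{Soc}(M/I^{n+1}M)$ is killed by $x\in\fm$, whence $\mathrm{Soc}(M/I^{n+1}M)\subseteq(I^{n+1}M:_M x)/I^{n+1}M$; and the snake lemma applied to multiplication by $x$ gives $\ell\big((I^{n+1}M:_M x)/I^{n+1}M\big)=\ell\big((M/xM)/I^{n+1}(M/xM)\big)$, which for $n\gg0$ is the Hilbert--Samuel polynomial of $M/xM$ with respect to $I$, of degree $d-1$. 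Thus $\mathrm{ir}_M(I^{n+1}M)$ is eventually bounded by a polynomial of degree $d-1$, so it has degree $\le d-1$, and it therefore admits a unique expression $\sum_{i=0}^{d-1}(-1)^i f_i(I;M)\binom{n+d-1-i}{d-1-i}$ with $f_i(I;M)\in\mathbb Z$, since those binomials form a $\mathbb Z$-basis of the numerical polynomials of degree $\le d-1$.

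\emph{Degree exactly $d-1$ — the main obstacle.} It remains to prove $f_0(I;M)\ne0$, i.e.\ degree $\ge d-1$. Passing from $M$ to $M/\Gamma_\fm(M)$ alters $\mathrm{Soc}(M/I^{n+1}M)$ only by a bounded additive error (from the sequence $0\to\Gamma_\fm(M)\to M/I^{n+1}M\to(M/\Gamma_\fm M)/I^{n+1}(M/\Gamma_\fm M)\to0$, valid for $n\gg0$, and $\dim_{\frak k}\Ext^1_R(\frak k,\Gamma_\fm M)<\infty$) and leaves $d$ unchanged, so I may assume $\depth M\ge1$. Then a general $x\in I$ is superficial for $I$ and a nonzerodivisor on $M$, so $(I^{n+1}M:_M x)=I^nM+(0:_Mx)=I^nM$ for $n\gg0$; together with the inclusion above this forces $\mathrm{Soc}(M/I^{n+1}M)=\mathrm{Soc}_R(I^nM/I^{n+1}M)$ for $n\gg0$. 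Hence for $n\gg0$, $\mathrm{ir}_M(I^{n+1}M)$ is the Hilbert function of the finitely generated graded $\mathcal F$-module $S:=\mathrm{Soc}_R(\gr_I M)=0:_{\gr_I M}(\fm\cdot\gr_I R)$, so it equals a polynomial of degree $\dim_{\mathcal F}S-1$. Since $\ell(M/I^{n+1}M)$ has degree $d$, $\gr_I M$ has dimension $d$ over $\gr_I R$, so there is a minimal prime $\frak P$ of $\gr_I R$ with $\dim\gr_I R/\frak P=d$ and $\frak P\in\Supp\gr_I M$; because $R/I$ is Artinian local, $\frak P$ contains the image of $\fm$, hence $\fm\cdot\gr_I R\subseteq\frak P$. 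Localizing at $\frak P$: $(\gr_I R)_{\frak P}$ is Artinian local and $(\gr_I M)_{\frak P}\ne0$ is finitely generated over it, so $S_{\frak P}=0:_{(\gr_I M)_{\frak P}}(\fm\gr_I R)_{\frak P}\supseteq\mathrm{Soc}\big((\gr_I M)_{\frak P}\big)\ne0$; therefore $\dim_{\mathcal F}S=d$ and the polynomial has degree exactly $d-1$. The hard part is precisely this last step: passing to $\depth M\ge1$ so that $\mathrm{ir}_M(I^{n+1}M)=\mathrm{Soc}_R(I^nM/I^{n+1}M)$ eventually, and then pinning down that the graded socle module $\mathrm{Soc}_R(\gr_I M)$ has the full dimension $d$.
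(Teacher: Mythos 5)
This paper does not prove Theorem \ref{T2.8} at all: it quotes it from \cite[Theorem 4.1]{CQT15}, so your argument can only be judged on its own terms. For the ``furthermore'' case $\ell(M/IM)<\infty$ --- the only case this paper actually uses --- your proof is essentially correct and complete: finite generation of $\Ext^1_R(\frak k,\bigoplus_n I^{n+1}M)$ over the Rees algebra, hence over the fibre cone, gives eventual polynomiality; the inclusion $\mathrm{Soc}(M/I^{n+1}M)\subseteq (I^{n+1}M:_Mx)/I^{n+1}M$ for a parameter $x\in I$ gives degree at most $d-1$; and after reducing to $\depth M>0$ and choosing $x$ superficial and regular, the identification of $\mathrm{Soc}(M/I^{n+1}M)$ with the degree-$n$ piece of $0:_{\gr_IM}\fm\gr_IR$, whose localization at a $d$-dimensional minimal prime of $\gr_IR$ is nonzero, gives degree exactly $d-1$. (Two cosmetic points: for $d=1$ the ``bounded additive error'' reduction to $M/\Gamma_{\fm}(M)$ does not preserve nonvanishing of a degree-zero leading term, but there the claim is trivial since $M/I^{n+1}M\neq 0$ always has nonzero socle; and the step ``every prime of $\gr_IR$ contains $\fm\gr_IR$'' is best justified by noting the image of $\fm$ in $(\gr_IR)_0=R/I$ is nilpotent.)

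The genuine gap concerns the first assertion, polynomiality for an \emph{arbitrary} ideal $I$: your opening identification $\mathrm{ir}_M(N)=\dim_{\frak k}\mathrm{Soc}(M/N)$ is false unless $\ell(M/N)<\infty$ (the paper itself states it only under that hypothesis). In general one has $\mathrm{ir}_M(N)=\sum_{\fp\in\Ass(M/N)}\dim_{k(\fp)}\mathrm{Soc}_{R_{\fp}}\big((M/N)_{\fp}\big)$, where $k(\fp)$ is the residue field of $R_{\fp}$; for instance with $R=M=k[[x,y]]$ and $I=(x)$ one has $\mathrm{ir}_R(I^{n+1})=1$ while $\dim_k\mathrm{Soc}(R/I^{n+1})=0$, so what your first paragraph computes is not the index of reducibility. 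The repair (and, for the general statement, the route of \cite{CQT15}) is to combine the displayed formula with Brodmann's theorem that $\Ass(M/I^{n+1}M)$ is a fixed finite set for $n\gg0$, and then to run your Rees-algebra/fibre-cone argument for each of the finitely many stable primes $\fp$ applied to $(R_{\fp},IR_{\fp},M_{\fp})$, summing the resulting eventually polynomial functions. With that amendment the whole proof goes through; as written, only the $\fm$-primary half of the theorem is established.
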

In the case of parameter ideals, we have the following (cf. \cite[Theorem 5.2]{CQT15})
\begin{theorem}
Let $M$ be a Cohen-Macaulay $R$-module of dimension $d$ and $\fq$ a parameter ideal of $M$. Then we have
    $$\mathrm{ir}_M({\frak q}^{n+1}M)=s_d(M) \binom{n+d-1}{d-1}$$
    for all $n \geq 0$.
\end{theorem}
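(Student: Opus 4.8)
The plan is to work directly with the identity $\mathrm{ir}_M(\fq^{n+1}M)=\dim_{\frak k}\mathrm{Soc}(M/\fq^{n+1}M)$ and to pin down this socle using the associated graded module of $\fq$. We may assume $d\ge 1$ (for $d=0$ one has $\fq=0$, a degenerate situation). Since $M$ is Cohen--Macaulay, the system $x_1,\dots,x_d$ is an $M$-regular sequence, so by a classical fact the natural map $(M/\fq M)[T_1,\dots,T_d]\to\gr_{\fq}(M):=\bigoplus_{k\ge0}\fq^kM/\fq^{k+1}M$ sending $T_i$ to the image of $x_i$ is an isomorphism; in particular each graded piece $\fq^kM/\fq^{k+1}M$ is a free $M/\fq M$-module of rank $\binom{k+d-1}{d-1}$. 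Since the socle of a finite direct sum is the direct sum of the socles, this gives $\dim_{\frak k}\mathrm{Soc}(\fq^kM/\fq^{k+1}M)=\binom{k+d-1}{d-1}\,\dim_{\frak k}\mathrm{Soc}(M/\fq M)$, and Northcott's theorem (Remark~\ref{R2.7}(i)) identifies $\dim_{\frak k}\mathrm{Soc}(M/\fq M)$ with $\mathrm{ir}_M(\fq)=s_d(M)$.

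The only substantial step is to show that the whole socle of $M/\fq^{n+1}M$ already lies inside the submodule $\fq^nM/\fq^{n+1}M$; equivalently, I will prove that $\fq^{m}M:_M\fm\subseteq\fq^{m-1}M$ for every $m\ge1$. This I again read off from the polynomial structure of $\gr_{\fq}(M)$. By Krull's intersection theorem every $0\ne z\in M$ has a finite $\fq$-adic order $j$, meaning $z\in\fq^jM\setminus\fq^{j+1}M$; then its leading form $z^{\ast}\in\fq^jM/\fq^{j+1}M$ is nonzero, and since multiplication by the variable $T_d$ is injective on the polynomial module $\gr_{\fq}(M)$ we get $T_d\,z^{\ast}\ne0$, i.e. $x_dz\in\fq^{j+1}M\setminus\fq^{j+2}M$. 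Hence if $z\in\fq^mM:_M\fm$, then $x_dz\in\fq^mM$ forces $j+1\ge m$, that is, $z\in\fq^{m-1}M$ (the case $m=1$ being trivial).

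Putting the two parts together with $m=n+1$ yields $\mathrm{Soc}(M/\fq^{n+1}M)=(\fq^{n+1}M:_M\fm)/\fq^{n+1}M\subseteq\fq^nM/\fq^{n+1}M$, and inside the latter module this subspace is precisely $\{w:\fm w=0\}=\mathrm{Soc}(\fq^nM/\fq^{n+1}M)$. Therefore
$$\mathrm{ir}_M(\fq^{n+1}M)=\dim_{\frak k}\mathrm{Soc}(\fq^nM/\fq^{n+1}M)=\binom{n+d-1}{d-1}s_d(M)$$
for all $n\ge0$, as claimed.

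I expect the containment $\fq^{m}M:_M\fm\subseteq\fq^{m-1}M$ to be the crux of the argument: the tempting shortcut of inducting on $m$ by writing $\fm=(x_1,\dots,x_d,\dots)$ fails because a general parameter ideal need not be a reduction of $\fm$, so one must genuinely exploit the freeness of $\gr_{\fq}(M)$ over $M/\fq M$ (Cohen--Macaulayness entering precisely here, through $x_1,\dots,x_d$ being a regular sequence). The remaining ingredients — the socle of a free module and the colon/socle bookkeeping — are routine.
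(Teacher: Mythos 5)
Your proof is correct. Note first that the paper itself gives no proof of this statement: it is quoted from \cite[Theorem 5.2]{CQT15}, so there is no in-paper argument to compare against, and what you have written is a self-contained proof. Your route --- identifying $\gr_{\fq}(M)=\bigoplus_{k\ge 0}\fq^kM/\fq^{k+1}M$ with the polynomial module $(M/\fq M)[T_1,\dots,T_d]$ (valid because a system of parameters of a Cohen--Macaulay module is an $M$-regular sequence), so that $\fq^nM/\fq^{n+1}M$ is a direct sum of $\binom{n+d-1}{d-1}$ copies of $M/\fq M$, and then invoking Northcott's theorem (Remark \ref{R2.7}(i)) --- is sound, and your leading-form argument with injectivity of multiplication by $T_d$ does establish the crucial containment $\fq^{n+1}M:_M\fm\subseteq \fq^nM$. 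Within the paper's own toolkit that containment is even quicker: every parameter ideal of a Cohen--Macaulay module is standard, so Lemma \ref{L2.2}(i) with $i=1$ gives $\fq^{n+1}M:_Mx_1=\fq^nM+(0:_Mx_1)=\fq^nM$, and $\fq^{n+1}M:_M\fm\subseteq\fq^{n+1}M:_Mx_1$; this replaces your quasi-regularity argument by a one-line colon computation, which is the style of argument the paper uses in Sections 3 and 4. The counting half of your argument (the graded pieces being direct sums of copies of $M/\fq M$) is genuinely needed and does not follow from Lemma \ref{L2.2} alone. Two cosmetic points: ``free $M/\fq M$-module of rank $r$'' should be read as ``isomorphic, as an $R/\fq$-module, to a direct sum of $r$ copies of $M/\fq M$,'' and your exclusion of $d=0$ is consistent with the paper's standing hypothesis $d>0$.
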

The readers may find in \cite{Tr14,Tr15, TY16} for more characterizations of the Cohen-Macaulayness of $M$ in terms of the coefficient $f_0(\fq, M)$. In this paper we study the function $\mathrm{ir}_M(\fq^{n+1}M)$ when $M$ is generalized Cohen-Macaulay and $\fq$ is a standard parameter ideal of $M$.

\section{An upper bound formula}
In this section we estimate the index of reducibility of powers of standard parameter ideals.  By the next result we show that the problem can be reduced to the case $\mathrm{depth}(M) > 0$.
\begin{lemma}\label{L3.1}
Let $M$ be a generalized Cohen-Macaulay module of dimension
$d$ and $\overline{M} = M/H^0_{\frak m}(M)$. Let $\frak q = (x_1, ...,x_d)$ be a standard parameter ideal of $M$. Then
$$\mathrm{ir}_M(\frak q^{n+1}M) = \mathrm{ir}_{\overline{M}}(\frak q^{n+1}\overline{M}) + s_0(R)$$
for all $n \ge 1$.
\end{lemma}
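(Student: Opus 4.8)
The plan is to exploit the short exact sequence $0 \to H^0_{\frak m}(M) \to M \to \overline{M} \to 0$ together with the fact that, since $M$ is generalized Cohen-Macaulay, $H := H^0_{\frak m}(M)$ has finite length, and hence is annihilated by $\frak m^t$ for some $t$; in particular $\frak q^{n+1} H = 0$ for $n$ large, and by the standardness of $\frak q$ we actually have $\frak q H = 0$, so $\frak q^{n+1} H = 0$ for all $n \ge 0$. First I would observe that $H^i_{\frak m}(M) \cong H^i_{\frak m}(\overline{M})$ for all $i \ge 1$, while $H^0_{\frak m}(\overline{M}) = 0$; this gives $s_0(\overline{M}) = 0$ and $s_i(\overline{M}) = s_i(M)$ for $i \ge 1$ (these are recorded as standard facts about generalized Cohen-Macaulay modules). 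Note also that $\overline{M}$ is again generalized Cohen-Macaulay of dimension $d$ and $\frak q$ is a standard parameter ideal of $\overline{M}$, so Theorem~\ref{ThA} applies to $\overline{M}$ as well; but for this lemma only the length count matters.

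The key step is to compute $\mathrm{ir}_M(\frak q^{n+1}M) = \dim_{\frak k}\mathrm{Soc}(M/\frak q^{n+1}M)$ by relating the socle of $M/\frak q^{n+1}M$ to that of $\overline{M}/\frak q^{n+1}\overline{M}$. Applying the snake lemma to multiplication and reduction, from $0 \to H \to M \to \overline{M} \to 0$ and $\frak q^{n+1}H = 0$ I get a short exact sequence
$$0 \to H \to M/\frak q^{n+1}M \to \overline{M}/\frak q^{n+1}\overline{M} \to 0,$$
using that $\frak q^{n+1}M \cap H = \frak q^{n+1}H = 0$ — here the hypothesis $n \ge 1$ enters, since for a standard (hence $d$-sequence) parameter ideal one has $\frak q^{n+1}M \cap H^0_{\frak m}(M) = 0$ for $n \ge 1$ by Lemma~\ref{L2.2}(ii)-type colon computations; this is exactly where one uses that $\frak q$ is standard rather than arbitrary. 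Taking the long exact sequence in $\Hom_R(\frak k, -)$, i.e.\ applying $0:_{(-)}\frak m$, yields
$$0 \to \mathrm{Soc}(H) \to \mathrm{Soc}(M/\frak q^{n+1}M) \to \mathrm{Soc}(\overline{M}/\frak q^{n+1}\overline{M}) \to \Ext^1_R(\frak k, H).$$
Since $H = H^0_{\frak m}(M)$ has finite length and, being $\frak m$-torsion, is already equal to its own socle plus higher layers, the connecting map must be analyzed; the cleanest route is to note that $\mathrm{Soc}(H) = \mathrm{Soc}(M) = 0:_M \frak m$ when $\depth M = 0$ and that the sequence $0 \to H \to M/\frak q^{n+1}M \to \overline{M}/\frak q^{n+1}\overline{M}\to 0$ splits on socles because $H \subseteq 0:_{M/\frak q^{n+1}M}\frak m$. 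Concretely, every element of $\mathrm{Soc}(\overline M/\frak q^{n+1}\overline M)$ lifts to an element $x$ of $M/\frak q^{n+1}M$ with $\frak m x \subseteq H$, but since $H$ itself is killed by $\frak m$ we get $\frak m^2 x = 0$; using that $H = H^0_{\frak m}(M) = 0:_{M}\frak m^\infty$ and the standardness to ensure no new torsion is created modulo $\frak q^{n+1}$, one can adjust the lift to land in the socle. This gives
$$\dim_{\frak k}\mathrm{Soc}(M/\frak q^{n+1}M) = \dim_{\frak k}\mathrm{Soc}(H) + \dim_{\frak k}\mathrm{Soc}(\overline M/\frak q^{n+1}\overline M) = s_0(M) + \mathrm{ir}_{\overline M}(\frak q^{n+1}\overline M),$$
and since $H^0_{\frak m}(M)$ depends only on $M$ (and the notation in the statement writes $s_0(R)$ for what is $s_0(M) = \dim_{\frak k}\mathrm{Soc}(H^0_{\frak m}(M))$), this is the claimed identity.

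The main obstacle, and the step deserving the most care, is the exactness of $0 \to H \to M/\frak q^{n+1}M \to \overline{M}/\frak q^{n+1}\overline{M} \to 0$ on the left, i.e.\ the assertion $\frak q^{n+1}M \cap H^0_{\frak m}(M) = 0$ for $n \ge 1$, and the compatibility of socles under this sequence. Both hinge on the $d$-sequence / standard-parameter machinery: the first follows from iterating Lemma~\ref{L2.2}(ii) (or its standard consequence $\frak q^{n+1}M \cap H^0_{\frak m}(M) \subseteq \bigcap_m \frak q^m M \cap H^0_{\frak m}(M)$, which vanishes by Krull's intersection theorem once we know the intersection stabilizes, together with $\frak q H = 0$), and the second follows because $H$ maps isomorphically onto a submodule of the socle of $M/\frak q^{n+1}M$, forcing the $\Hom(\frak k,-)$ sequence to remain short exact. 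I expect the verification that the connecting homomorphism $\mathrm{Soc}(\overline M/\frak q^{n+1}\overline M) \to \Ext^1_R(\frak k,H)$ vanishes — equivalently that $\mathrm{ir}$ is additive here — to require the explicit colon-ideal identities of Lemma~\ref{L2.2} rather than a formal argument, and that is the genuinely technical point of the lemma.
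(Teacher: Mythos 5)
Your setup is fine: the short exact sequence $0 \to H \to M/\frak q^{n+1}M \to \overline{M}/\frak q^{n+1}\overline{M} \to 0$ (with $H = H^0_{\frak m}(M)$, using $\frak qH=0$ and $\frak q^{n+1}M \cap H = 0$ for $n\ge 1$ via Lemma \ref{L2.2}(ii)) is exactly the paper's starting point, and applying $\Hom_R(\frak k,-)$ reduces everything to showing the induced map on socles is surjective. But that surjectivity is precisely where your argument has a genuine gap. You justify it by asserting that $H$ ``is killed by $\frak m$'' (first to claim $H \subseteq 0:_{M/\frak q^{n+1}M}\frak m$, then to get $\frak m^2 x = 0$ for a lift $x$). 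That is false in the generalized Cohen--Macaulay setting: standardness of $\frak q$ gives only $\frak q H^0_{\frak m}(M)=0$, not $\frak m H^0_{\frak m}(M)=0$ (the latter is a Buchsbaum-type condition). So your ``splitting on socles'' argument collapses, and the subsequent ``adjust the lift'' step is left as an unproved assertion — as you yourself concede at the end, the vanishing of the connecting map is the genuinely technical point, and you never actually carry it out.

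What is actually needed, and what the paper proves, is the colon identity
\begin{equation*}
(\frak q^{n+1}M + H^0_{\frak m}(M)):_M \frak m \;=\; (\frak q^{n+1}M :_M \frak m) + H^0_{\frak m}(M) \qquad (n\ge 1).
\end{equation*}
The argument is short but uses standardness in an essential way: for $x$ in the left-hand side, $x_1x \in \frak q^{n+1}M + H^0_{\frak m}(M)$, and Lemma \ref{L2.2}(i) (applied modulo $H^0_{\frak m}(M)$, where $x_1$ is regular) gives $x \in \frak q^nM + H^0_{\frak m}(M)$; writing $x = y+z$ with $y \in \frak q^nM$, $z \in H^0_{\frak m}(M)$, one gets $\frak m y \subseteq \frak q^nM \cap (\frak q^{n+1}M + H^0_{\frak m}(M)) = \frak q^{n+1}M$, the last equality by the modular law and $\frak q^nM \cap H^0_{\frak m}(M) = 0$ (Lemma \ref{L2.2}(ii)). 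Hence $y \in \frak q^{n+1}M:_M\frak m$ and $x$ lies in the right-hand side, which gives the surjectivity on socles and then the length count $\mathrm{ir}_M(\frak q^{n+1}M) = \mathrm{ir}_{\overline{M}}(\frak q^{n+1}\overline{M}) + s_0(M)$. Until you supply this (or an equivalent) computation, your proof is incomplete, and the specific mechanism you proposed in its place does not work outside the Buchsbaum case.
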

\begin{proof}
Since $\frak q$ is standard and Lemma \ref{L2.2} we have
$$(\frak q^{n+1} + H^0_{\frak m}(M)):_M \frak  m \subseteq (\frak q^{n+1}M + H^0_{\frak m}(M)):_M x_1 = \frak q^{n}M + H^0_{\frak m}(M).$$
Let $x \in (\frak q^{n+1}M + H^0_{\frak
m}(M)):_R \frak  m$, we have $x = y + z$ with $y \in \frak q^{n}M$
and $z \in H^0_{\frak m}(M)$. Hence $\frak m y \subseteq \frak q^{n}M
\cap (\frak q^{n+1}M + H^0_{\frak m}(M)) = \frak q^{n+1}M$. So $y \in \frak
q^{n+1}M :_M \frak  m$. Therefore
$$(\frak q^{n+1}M + H^0_{\frak m}(M)):_M \frak  m = (\frak q^{n+1}M :_M \frak m )+ H^0_{\frak m}(M)$$
for all $n\ge 1$. From the above equation we have that the short exact
sequence
$$0 \to H^0_{\frak m}(M) \to M/\frak q^{n+1}M \to \overline{M}/\frak q^{n+1}\overline{M} \to
0$$
derives the short exact sequence
$$0 \to (0):_{H^0_{\frak m}(M)}\frak m \to (\frak q^{n+1}M:_M\frak m)/\frak q^{n+1}M \to (\frak q^{n+1}\overline{M}:_{\overline{M}}\frak m)/\frak q^{n+1}\overline{M} \to
0$$ for all $n \ge 1$. The proof is complete.
\end{proof}

When $\depth(M) > 0$ we have $s_0(M) = 0$. So the following lemma is obvious, hence we omit its proof.
\begin{lemma}\label{L3.2} Let $M$ be a generalized Cohen-Macaulay module of dimension
$d>0$ and $\depth(M) > 0$. Then every parameter element $x$ is a regular element of $M$ and $s_1(M) = \ell((xM:\fm)/xM)$.
\end{lemma}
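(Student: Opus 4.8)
The plan is to handle the two assertions separately. For the claim that every parameter element of $M$ is a regular element, I would use the following standard consequence of the structure theory of generalized Cohen-Macaulay modules (see \cite{CST78,T86}): for every $\frak p \in \Supp M$ with $\frak p \ne \frak m$, the localization $M_{\frak p}$ is Cohen-Macaulay with $\dim M_{\frak p} = d - \dim R/\frak p$. In particular, if $\frak p \in \Ass M$ and $\frak p \ne \frak m$, then $\depth M_{\frak p} = 0$ forces $\dim M_{\frak p} = 0$ and hence $\dim R/\frak p = d$; and since $\depth M > 0$ excludes $\frak m$ from $\Ass M$, every associated prime of $M$ has dimension $d$. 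A parameter element $x$ cannot then lie in an associated prime $\frak p$ of $M$, for that would put $\frak p$ in $\Supp(M/xM)$ and give $d = \dim R/\frak p \le \dim M/xM = d-1$, which is absurd. Hence $x$ lies in no associated prime of $M$, i.e.\ $x$ is $M$-regular.

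For the socle identity, with $x$ now $M$-regular I would apply $H^{\bullet}_{\frak m}(-)$ to the short exact sequence $0 \to M \xrightarrow{\,x\,} M \to M/xM \to 0$. Since $\depth M > 0$ gives $H^0_{\frak m}(M) = 0$, the long exact sequence produces an isomorphism $H^0_{\frak m}(M/xM) \cong 0:_{H^1_{\frak m}(M)} x$. The socle of any module is contained in its $\frak m$-torsion submodule, so $\mathrm{Soc}(M/xM) = \mathrm{Soc}\big(H^0_{\frak m}(M/xM)\big)$; combining this with the isomorphism above and with $x \in \frak m$ (so that killing by $\frak m$ subsumes killing by $x$), we obtain
$$\mathrm{Soc}(M/xM) \;\cong\; \mathrm{Soc}\big(0:_{H^1_{\frak m}(M)} x\big) \;=\; 0:_{H^1_{\frak m}(M)} \frak m \;=\; \mathrm{Soc}\big(H^1_{\frak m}(M)\big).$$
Since $\mathrm{Soc}(M/xM) \cong (xM:\frak m)/xM$ as $\frak k$-vector spaces, comparing dimensions gives $\ell\big((xM:\frak m)/xM\big) = \dim_{\frak k}\mathrm{Soc}(H^1_{\frak m}(M)) = s_1(M)$.

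I do not expect a real obstacle here, consistent with the authors' remark that the lemma is obvious; the only point requiring a bit of care is the chain of socle identifications, where $x \in \frak m$ is used to replace $0:_{H^1_{\frak m}(M)}(\frak m, x)$ by $0:_{H^1_{\frak m}(M)}\frak m$. The computation is uniform in $d$; in the extreme case $d = 1$ the hypothesis $\depth M > 0$ simply says $M$ is Cohen-Macaulay and the identity reduces to Northcott's formula for the Cohen-Macaulay type of $M$.
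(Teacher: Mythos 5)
The paper offers no proof of this lemma at all---the authors dismiss it as obvious and omit the argument---so there is nothing to deviate from, and your writeup is exactly the standard filling-in they intend: unmixedness of a generalized Cohen--Macaulay module away from $\fm$ (every $\fp\in\Ass M\setminus\{\fm\}$ has $\dim R/\fp=d$) together with $\depth M>0$ makes every parameter element $M$-regular, and the long exact sequence of local cohomology gives $H^0_{\fm}(M/xM)\cong 0:_{H^1_{\fm}(M)}x$, whence $\mathrm{Soc}(M/xM)\cong \mathrm{Soc}(H^1_{\fm}(M))$ because $x\in\fm$. Both steps are correct; the localization statement you quote from \cite{CST78,T86} is standard (in full generality it is obtained by passing to the completion), and in any case only its consequence on associated primes is actually used.
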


For inductive arguments that used in this paper we need the following result.
\begin{lemma}\label{L3.3}
  Let $M$ be a generalized Cohen-Macaulay module of dimension
$d>1$ and $\mathrm{depth}(M)>0$. Let $\frak q = (x_1, ...,x_d)$ be a standard parameter ideal of $M$ and set $M' = M/x_1M$ and $\frak q' = (x_2, ..., x_d)$. Then
$$\mathrm{ir}_M(\frak q^{n+1}M) - \mathrm{ir}_M(\frak q^{n}M) \le \mathrm{ir}_{M'}((\frak q')^{n+1}M') - s_1(M)$$
for all $n \ge 1$.
\end{lemma}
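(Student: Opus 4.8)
The plan is to obtain the inequality from a single short exact sequence gotten by killing $x_1$, together with a transversality statement between two natural subspaces of $\mathrm{Soc}(M'/(\frak q')^{n+1}M')$.

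Since $\depth M>0$, Lemma \ref{L3.2} shows $x_1$ is $M$-regular, and Lemma \ref{L2.2}(i) (with $i=1$, so $\frak q_0=(0)$) gives $\frak q^{n+1}M:_M x_1=\frak q^nM$. Hence multiplication by $x_1$ embeds $M/\frak q^nM$ into $M/\frak q^{n+1}M$ with image $(x_1M+\frak q^{n+1}M)/\frak q^{n+1}M$, and since $\frak q^{n+1}M+x_1M=(\frak q')^{n+1}M+x_1M$ (an elementary identity, as $\frak q^{n+1}$ and $(\frak q')^{n+1}$ have the same image modulo $x_1$) one gets a short exact sequence
$$0\longrightarrow M/\frak q^nM\xrightarrow{\ x_1\ }M/\frak q^{n+1}M\xrightarrow{\ \pi\ }M'/(\frak q')^{n+1}M'\longrightarrow 0.$$
Applying the left-exact functor $\Hom_R(\frak k,-)$ (i.e. $N\mapsto 0:_N\frak m$) and using $\mathrm{ir}_M(-)=\dim_{\frak k}\mathrm{Soc}(M/-)$, one obtains
$$\mathrm{ir}_M(\frak q^{n+1}M)-\mathrm{ir}_M(\frak q^nM)=\dim_{\frak k}\im\pi_*=\mathrm{ir}_{M'}((\frak q')^{n+1}M')-\dim_{\frak k}\coker\pi_*.$$
So it suffices to prove $\dim_{\frak k}\coker\pi_*\ge s_1(M)$, and for this it is enough to find an $s_1(M)$-dimensional subspace of $\mathrm{Soc}(M'/(\frak q')^{n+1}M')$ meeting $\im\pi_*$ only in $0$.

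The natural candidate is the image of $\mathrm{Soc}(M')=0:_{M'}\frak m=(x_1M:_M\frak m)/x_1M$, which has dimension $s_1(M)$ by Lemma \ref{L3.2}. Because the restriction of a standard system of parameters is again standard, $\frak q'$ is a standard parameter ideal of $M'$, so Lemma \ref{L2.2}(ii) applied to $M'$ yields $(0:_{M'}x_2)\cap\frak q'M'=0$, hence $(0:_{M'}\frak m)\cap(\frak q')^{n+1}M'=0$ and the canonical map $j\colon\mathrm{Soc}(M')\to\mathrm{Soc}(M'/(\frak q')^{n+1}M')$ is injective. Thus $\im j$ has the required dimension.

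The heart of the proof, and what I expect to be the real obstacle, is showing $\im\pi_*\cap\im j=0$. Suppose $\xi$ lies in both images: write $\xi=\pi_*(m+\frak q^{n+1}M)$ with $\frak m m\subseteq\frak q^{n+1}M$ and $\xi=j(y+x_1M)$ with $\frak m y\subseteq x_1M$. Equality of the images in $M'/(\frak q')^{n+1}M'$ gives $m-y=x_1t+q$ with $t\in M$, $q\in\frak q^{n+1}M$; set $u=m-q=y+x_1t$. From $\frak m m,\frak m q\subseteq\frak q^{n+1}M$ and $\frak m y\subseteq x_1M$ we get $\frak m u\subseteq\frak q^{n+1}M\cap x_1M=x_1\frak q^nM$, where the last equality is again $\frak q^{n+1}M:_Mx_1=\frak q^nM$. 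In particular $x_1u\in x_1\frak q^nM$, so $u\in\frak q^nM\subseteq\frak qM$ by regularity of $x_1$ (here $n\ge 1$ is used), and $\frak m\bar u=0$ in $M'$. Therefore $\bar u\in(0:_{M'}\frak m)\cap\frak q'M'=0$, i.e. $u\in x_1M$, whence $y=u-x_1t\in x_1M$ and $\xi=j(y+x_1M)=0$. All the steps except this transversality are formal homological bookkeeping; the point at which the special features of a standard system of parameters are indispensable is precisely here (through Lemma \ref{L2.2}), and the delicate part is to track the representatives carefully enough to land inside $x_1\frak q^nM$ rather than merely inside $x_1M$ or $\frak q^{n+1}M$.
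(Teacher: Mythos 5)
Your proof is correct and follows essentially the same route as the paper: the same short exact sequence $0\to M/\frak q^nM\xrightarrow{x_1}M/\frak q^{n+1}M\to M'/(\frak q')^{n+1}M'\to 0$, the same application of $\Hom_R(\frak k,-)$, and the same source of the defect, namely the $s_1(M)$-dimensional socle $(x_1M:_M\frak m)/x_1M$ measured via Lemma \ref{L3.2}. Your transversality argument by element chasing (using $x_1M\cap\frak q^{n+1}M=x_1\frak q^nM$ and Lemma \ref{L2.2}(ii) applied to $M'$ with first element $x_2$, which is literally the same statement as Lemma \ref{L2.2}(ii) for $M$ with $i=2$) is just a repackaging of the paper's modular-law length computation bounding $\ell(\mathrm{coker}\,\varphi_n)$ from below.
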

\begin{proof} By Lemma \ref{L2.2} we have $\frak q^{n+1}M : x_1 = \frak q^nM$, so we have the short exact sequence
$$0 \to M/\frak q^nM \to M/\frak q^{n+1}M \to M'/(\frak q')^{n+1}M' \to 0$$
  for all $n \ge 1$. By applying the functor
$\mathrm{Hom}_R(R/\frak m, \bullet)$ we obtain the following exact sequence
$$0 \to (\frak q^nM:_{M}\frak m)/\frak q^nM \to
(\frak q^{n+1}M:_M\frak m)/\frak q^{n+1}M \overset{\varphi_n}{\to}
[(\frak q')^{n+1}M' :_{M'} \frak  m]/(\frak q')^{n+1}M' .$$
Therefore
$$\mathrm{ir}_M(\frak q^{n+1}M) - \mathrm{ir}_M(\frak q^{n}M) = \mathrm{ir}_{M'}((\frak q')^{n+1}M') - \ell (\mathrm{coker}(\varphi_n))$$
for all $n \ge 1$. On the other hand we have $\mathrm{im}(\varphi_n) = [(\frak q^{n+1}M:_M\frak m)+ x_1M]/(\frak q^{n+1}M+x_1M)$, hence
$$\mathrm{coker}(\varphi_n) = \frac{(\frak q^{n+1}M+x_1M):_M\frak m}{(\frak q^{n+1}M:_M \frak m)+ x_1M}.$$
Thus for all $n \ge 1$ we have
\begin{eqnarray*}
\ell(\mathrm{coker}(\varphi_n)) &\ge & \ell \big( \frac{(\frak q^{n+1}M:_M \frak m)+ (x_1M:_M \frak m)}{(\frak q^{n+1}M:_M \frak m)+ x_1M} \big)\\
&=& \ell \big( \frac{ x_1M:_M \frak m}{( x_1M:_M \frak m) \cap [(\frak q^{n+1}M:_M \frak m)+ x_1M]} \big)\\
&=& \ell \big( \frac{ x_1M:_M \frak m}{[( x_1M:_M \frak m) \cap (\frak q^{n+1}M:_M \frak m)]+ x_1M} \big)\\
&=& \ell \big( \frac{ x_1M:_M \frak m}{x_1M} \big)\\
&=& s_1(M).
\end{eqnarray*}
For the third equation we note that
$$( x_1M:_M \frak m) \cap (\frak q^{n+1}M:_M \frak m) \subseteq ( x_1M:_M x_2) \cap \frak q^nM \subseteq x_1M$$ by Lemma \ref{L2.2}, and the last equation follows from Lemma \ref{L3.2}. The proof is complete.
\end{proof}
We are now ready to prove the main result of this section.
\begin{proof}[Proof of Theorem \ref{ThA}]
We prove by induction of $d$. If $d=1$ we have $\frak q = (x)$ and $\frak q^{n+1} = (x^{n+1})$. The assertion follows from the Goto-Suzuki result (see, Remark \ref{R2.7} (ii)). Suppose that $d>1$ and the assertion is proved for $d-1$. If $n=0$, thanks to Goto-Suzuki's result we have
  $$\mathrm{ir}_M(\frak q) \le \sum_{i = 0}^d \binom{d}{i}s_i(M) = \sum_{i=1}^{d}\big( \sum_{j=1}^{d-i+1} \binom{d-i}{j-1} s_j(M)\big) + s_0(M).$$
 For $n \ge 1$, by Lemma \ref{L3.1} we have $\mathrm{ir}_M(\frak q^{n+1}M) = \mathrm{ir}_{\overline{M}}(\frak q^{n+1}\overline{M}) + s_0(M)$ where $\overline{M} = M/H^0_{\frak m}(M) $. Since $s_0(\overline{M}) = 0$ and $s_i(\overline{M}) = s_i(M)$ for all $i>0$, we can assume henceforth that $\mathrm{depth}(M) > 0$. We need to prove that
 $$\mathrm{ir}_M(\frak q^{n+1}M) \le \sum_{i=1}^{d} \binom{n+d-i}{d-i} \big ( \sum_{j=1}^{d-i+1} \binom{d-i}{j-1} s_j(M)\big ).$$
 Set $M' = M/(x_1)M$, we have $\frak q' = (x_2, ..., x_d)$ is a standard parameter ideal of $M'$. The short exact sequence
 $$0 \to M \overset{x_1}{\to} M \to M' \to 0$$
 derives that $H^0_{\frak m}(M') \cong H^1_{\frak m}(M)$ and the exact sequence
 $$0 \to H^i_{\frak m}(M) \to H^i_{\frak m}(M') \to H^{i+1}_{\frak m}(M) $$
 for all $i = 1, ..., d-1 $. Therefore $s_0(M') = s_1(M)$ and $s_i(M') \le s_i(M) + s_{i+1}(M)$ for all $i = 1, ..., d-1$. By Lemma \ref{L3.3} we have
 $$\mathrm{ir}_M(\frak q^{n+1}M) - \mathrm{ir}_M(\frak q M)  \le  \sum_{k=1}^n [\mathrm{ir}_{M'}((\frak q')^{k+1}M) - s_1(M)].$$
By inductive hypothesis we have
\begin{eqnarray*}
\mathrm{ir}_M(\frak q^{n+1}M) &-& \mathrm{ir}_M(\frak q) \\
&\le&  \sum_{k=1}^n \bigg[\sum_{i=1}^{d-1} \binom{k+d-i-1}{d-i-1} \bigg(
\sum_{j=1}^{d-i} \binom{d-i-1}{j-1} s_j(M')\bigg) + s_0(M') -
s_1(M) \bigg]\\
&\le & \sum_{k=1}^n \sum_{i=1}^{d-1} \binom{k+d-i-1}{d-i-1} \bigg(
\sum_{j=1}^{d-i} \binom{d-i-1}{j-1} \big(s_j(M) +
s_{j+1}(M)\big)\bigg)\\
&=& \sum_{k=1}^n \sum_{i=1}^{d-1} \binom{k+d-i-1}{d-i-1} \bigg(
\sum_{j=1}^{d-i+1}\binom{d-i}{j-1} s_j(M) \bigg)\\
&=& \sum_{i=1}^{d-1} \sum_{k=1}^n \binom{k+d-i-1}{d-i-1} \bigg(
\sum_{j=1}^{d-i+1}\binom{d-i}{j-1} s_j(M) \bigg) \\
&=& \sum_{i=1}^{d-1} \bigg(\binom{n+d-i}{d-i}- 1\bigg) \bigg(
\sum_{j=1}^{d-i+1}\binom{d-i}{j-1} s_j(M) \bigg) \\
&=& \bigg[\sum_{i=1}^{d-1} \binom{n+d-i}{d-i} \big(
\sum_{j=1}^{d-i+1}\binom{d-i}{j-1} s_j(M) \big) \bigg] -
\sum_{i=1}^{d-1} \sum_{j=1}^{d-i+1}\binom{d-i}{j-1} s_j(M) \\
&=& \bigg[\sum_{i=1}^{d} \binom{n+d-i}{d-i} \big(
\sum_{j=1}^{d-i+1}\binom{d-i}{j-1} s_j(M) \big) \bigg] -
\sum_{i=1}^{d} \sum_{j=1}^{d-i+1}\binom{d-i}{j-1} s_j(M) \\
&\le& \sum_{i=1}^{d} \binom{n+d-1}{d-i} \big(
\sum_{j=1}^{d-i+1}\binom{d-i}{j-1} s_j(M) \big) -
\mathrm{ir}_M(\frak q).
\end{eqnarray*}
Thus
 $$\mathrm{ir}_M(\frak q^{n+1}M) \le \sum_{i=1}^{d} \binom{n+d-i}{d-i} \big( \sum_{j=1}^{d-i+1} \binom{d-i}{j-1} s_j(M)\big).$$
This completes the proof.
\end{proof}
\begin{remark}\label{R3.4} \rm It should be noted that the inequality in Theorem \ref{ThA} becomes an quality
$$\mathrm{ir}_M(\frak q^{n+1}M) = \sum_{i=1}^{d} \binom{n+d-i}{d-i} \big( \sum_{j=1}^{d-i+1} \binom{d-i}{j-1} s_j(M)\big) + s_0(M)$$
for some $n \ge 1$ if
$$\mathrm{ir}_{\overline{M}}(\frak q) = \sum_{i = 1}^d \binom{d}{i}s_i(M),$$
where $\overline{M} = M/H^0_{\frak m}(M)$. In the next section we will show that the converse conclusion  is also true.
\end{remark}

Let $\fq$ be a parameter ideal of $M$.  By Theorem \ref{T2.8}  we write for large enough $n$
$$\mathrm{ir}_M(\fq^{n+1}M)=\sum\limits_{i=0}^{d-1} (-1)^{i}f_i(\fq;M)\binom{n+d-i-1}{d-i-1}$$
with $f_i(\fq;M) \in \mathbb{Z}$ for all $i = 0, \ldots, d-1$. The following immediate consequence of Theorem \ref{ThA} is a generalization of \cite[Proposition 3.4]{Tr14}.
\begin{corollary}\label{cor} Let $M$ be a generalized Cohen-Macaulay module of dimension $d \ge 2$ and $\frak q$ a standard parameter ideal. Then
$$f_0(\frak q;M)\le \sum_{j=1}^{d} \binom{d-1}{j-1} s_j(M).$$
\end{corollary}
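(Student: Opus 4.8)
The idea is to read off the leading coefficient of the quasi-polynomial $\mathrm{ir}_M(\fq^{n+1}M)$ from the upper bound in Theorem \ref{ThA} and match it against the coefficient $f_0(\fq;M)$ from Theorem \ref{T2.8}. By Theorem \ref{T2.8}, for large enough $n$ one has $\mathrm{ir}_M(\fq^{n+1}M) = f_0(\fq;M)\binom{n+d-1}{d-1} + (\text{lower order terms in } n)$, so $f_0(\fq;M)$ is $(d-1)!$ times the leading coefficient of this polynomial in $n$. On the other hand, Theorem \ref{ThA} bounds $\mathrm{ir}_M(\fq^{n+1}M)$ from above by $\sum_{i=1}^{d} \binom{n+d-i}{d-i} \big( \sum_{j=1}^{d-i+1} \binom{d-i}{j-1} s_j(M)\big) + s_0(M)$ for all $n \ge 0$. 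As a function of $n$, the term $\binom{n+d-i}{d-i}$ has degree $d-i$, so the unique top-degree contribution (degree $d-1$) comes from $i=1$, and its coefficient is $\sum_{j=1}^{d}\binom{d-1}{j-1}s_j(M)$ times $\binom{n+d-1}{d-1}$.

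So the plan is: first write both sides as polynomials in $n$ valid for all large $n$; second, observe that if $p(n) \le q(n)$ for all sufficiently large integers $n$, where $p,q$ are polynomials, then $\deg p \le \deg q$ and, if the degrees are equal, the leading coefficient of $p$ is at most that of $q$. Since $d \ge 2$, both sides here have degree exactly $d-1$ in $n$ (the left side has degree $d-1$ by Theorem \ref{T2.8}, as $\ell(M/\fq M) < \infty$; the right side has degree $d-1$ from the $i=1$ term, noting $s_1(M)$ could vanish but the comparison of leading coefficients still goes through since the left leading coefficient is nonnegative). Comparing the coefficients of $\binom{n+d-1}{d-1}$ on both sides then yields $f_0(\fq;M) \le \sum_{j=1}^{d}\binom{d-1}{j-1}s_j(M)$. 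One small subtlety: the binomials $\binom{n+d-i}{d-i}$ are not monomials, so it is cleanest to expand everything in the basis $\left\{\binom{n+d-1}{d-1}, \binom{n+d-2}{d-2}, \dots\right\}$ rather than in powers of $n$; in that basis the coefficient of the top term $\binom{n+d-1}{d-1}$ on the right-hand side is exactly $\sum_{j=1}^{d}\binom{d-1}{j-1}s_j(M)$, with no contamination from the lower-degree terms.

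The only real point requiring care is the elementary lemma comparing leading behaviour of two integer-valued functions that are eventually polynomial and satisfy $\le$; this is standard and needs no input beyond the hypothesis $d \ge 2$ to guarantee that the degree-$(d-1)$ term genuinely controls the comparison. Everything else is bookkeeping: extracting the $i=1$ summand from the sum in Theorem \ref{ThA} and identifying it with the leading term prescribed by Theorem \ref{T2.8}. There is essentially no obstacle here — the corollary is a direct asymptotic consequence of Theorem \ref{ThA}, and the proof is a few lines of coefficient comparison.
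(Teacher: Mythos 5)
Your argument is correct and is exactly the route the paper intends: Corollary \ref{cor} is stated there as an immediate consequence of Theorem \ref{ThA}, obtained by comparing the coefficient of $\binom{n+d-1}{d-1}$ in the eventual polynomial of Theorem \ref{T2.8} with the $i=1$ term of the bound in Theorem \ref{ThA}, the hypothesis $d\ge 2$ ensuring the constant $s_0(M)$ does not affect the top coefficient. Your handling of the coefficient comparison (if $f_0$ exceeded the bound, the difference of the two eventually-polynomial functions would tend to $+\infty$, contradicting the inequality) is the standard completion of this step, so there is nothing to add.
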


 In the next section we can see that the inequality in Corollary  \ref{cor} is an equality for all  parameter ideals contained in a large enough power of $\fm$.
\section{The extremely case}
In this section we compute the index of reducibility of powers of standard parameter ideals $\frak q$ that satisfy the extremely condition, it means that
$$\mathrm{ir}_M(\frak q) = \sum_{i = 0}^d \binom{d}{i}s_i(M).$$
\begin{lemma}\label{L4.1} Let $M$ be a generalized Cohen-Macaulay modules of dimension $d$ and $\frak q = (x_1, ..., x_d)$ a parameter ideal of $M$. Let $M' = M/x_1M$ and $\frak q' = (x_2, ..., x_d)$. Then
\begin{enumerate}[{(i)}]  \rm
\item {\it If $\frak q$ is standard and $\mathrm{ir}_M(\frak q) = \sum_{i = 0}^d \binom{d}{i}s_i(M)$, then $s_i(M') = s_i(M)+ s_{i+1}(M)$ for all $i = 0, ..., d-1$ and $\mathrm{ir}_{M'}(\frak q') = \sum_{i = 0}^{d-1} \binom{d-1}{i}s_i(M')$.}
 \item {\it Let $n_0$ be a positive integer such that $\frak m^{n_0}H^i_{\frak m}(M) = 0$ for all $i = 0, ..., d-1$. Then for all parameter ideal $\frak q \subseteq \frak m^{2n_0}$ we have $\frak q$ is standard and
 $$\mathrm{ir}_M(\frak q) = \sum_{i = 0}^d \binom{d}{i}s_i(M).$$}
\end{enumerate}
\end{lemma}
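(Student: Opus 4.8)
The plan is to prove (i) by a short ``sandwich'' argument based on the Goto--Suzuki bound together with the identity $M'/\fq'M'\cong M/\fq M$, and to derive (ii) from the known results on parameter ideals lying deep in $\fm$.

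For (i): observe first that $M'/\fq'M'$ is canonically isomorphic to $M/(x_1,\dots,x_d)M=M/\fq M$, so
$$\mathrm{ir}_{M'}(\fq')=\dim_{\frak k}\mathrm{Soc}(M'/\fq'M')=\dim_{\frak k}\mathrm{Soc}(M/\fq M)=\mathrm{ir}_M(\fq).$$
Since $M'$ is again generalized Cohen--Macaulay (of dimension $d-1$) and $\fq'$ is a standard parameter ideal of $M'$, the Goto--Suzuki bound (Remark \ref{R2.7}(ii)) gives $\mathrm{ir}_{M'}(\fq')\le\sum_{i=0}^{d-1}\binom{d-1}{i}s_i(M')$. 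To bound the last sum from above by $\sum_{i=0}^{d}\binom{d}{i}s_i(M)$ I run the long exact local cohomology sequence of $0\to x_1M\to M\to M'\to 0$, by a computation parallel to the one in the proof of Theorem \ref{ThA}: because $\fq$ is standard one has $x_1H^i_{\fm}(M)=0$ for $i<d$, and because $x_1$ is a parameter of a generalized Cohen--Macaulay module $0:_Mx_1$ has finite length, so $0:_Mx_1=H^0_{\fm}(M)$; this yields exact sequences
$$0\to H^i_{\fm}(M)\to H^i_{\fm}(M')\to \bigl(0:_{H^{i+1}_{\fm}(M)}x_1\bigr)\to 0\qquad(0\le i\le d-1),$$
hence $s_i(M')\le s_i(M)+s_{i+1}(M)$. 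Putting these together with Pascal's identity,
$$\mathrm{ir}_M(\fq)=\mathrm{ir}_{M'}(\fq')\le\sum_{i=0}^{d-1}\binom{d-1}{i}s_i(M')\le\sum_{i=0}^{d-1}\binom{d-1}{i}\bigl(s_i(M)+s_{i+1}(M)\bigr)=\sum_{i=0}^{d}\binom{d}{i}s_i(M),$$
and the hypothesis $\mathrm{ir}_M(\fq)=\sum_{i=0}^{d}\binom{d}{i}s_i(M)$ forces all three inequalities to be equalities. The equality $\mathrm{ir}_{M'}(\fq')=\sum_{i=0}^{d-1}\binom{d-1}{i}s_i(M')$ is the second assertion of (i); and the resulting identity $\sum_{i=0}^{d-1}\binom{d-1}{i}s_i(M')=\sum_{i=0}^{d-1}\binom{d-1}{i}\bigl(s_i(M)+s_{i+1}(M)\bigr)$, combined with the termwise inequalities $s_i(M')\le s_i(M)+s_{i+1}(M)$ and the positivity of the $\binom{d-1}{i}$, forces $s_i(M')=s_i(M)+s_{i+1}(M)$ for every $i=0,\dots,d-1$, which is the first assertion. (When $d=1$ one reads $\fq'=(0)$ and $M'$ Artinian, and the same chain applies.)

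For (ii): the standardness of $\fq$ is a classical fact --- since $\fm^{n_0}$ annihilates $H^i_{\fm}(M)$ for all $i<d$, the ideal $\fq\subseteq\fm^{2n_0}$ is contained in the square of $\bigcap_{i<d}\Ann_R H^i_{\fm}(M)$, and every parameter ideal lying in that square is standard (cf. \cite{CST78, T86}). Given this, the equality $\mathrm{ir}_M(\fq)=\sum_{i=0}^{d}\binom{d}{i}s_i(M)$ is the ``deep parameter'' case treated in \cite[Corollary 4.3]{CQ11} and \cite[Theorem 1.1]{CT08}: the Goto--Suzuki inequality supplies $\le$, and for the reverse inequality one uses the Koszul complex of $x_1,\dots,x_d$ on $M$ to produce, out of the socles $\mathrm{Soc}(H^i_{\fm}(M))$ (each entering with multiplicity $\binom{d}{i}$), a family of $\sum_{i=0}^{d}\binom{d}{i}s_i(M)$ elements of $\mathrm{Soc}(M/\fq M)$ that are linearly independent as soon as $\fq$ lies sufficiently deep in $\fm$ --- the point being that $\fq\subseteq\fm^{2n_0}$ is deep enough. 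The routine part is (i): once the identification $M'/\fq'M'\cong M/\fq M$ is observed, it is purely the Goto--Suzuki bound on $M'$ plus Pascal's rule. The substantive difficulty is the quantitative content of (ii) --- checking that the explicit exponent $2n_0$ is enough, both for $\fq$ to be standard and, more delicately, for the socle contributions of the distinct $H^i_{\fm}(M)$'s to remain independent in $\mathrm{Soc}(M/\fq M)$; this is where I would rely on (and, if necessary, re-derive the bounds in) \cite{CST78, T86, CQ11, CT08}.
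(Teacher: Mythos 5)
Your proposal is correct and takes essentially the same route as the paper: the paper dismisses (i) as trivial, and your sandwich argument via $M'/\fq'M'\cong M/\fq M$, the Goto--Suzuki bound on $M'$, the exact sequences $0\to H^i_{\fm}(M)\to H^i_{\fm}(M')\to (0:_{H^{i+1}_{\fm}(M)}x_1)\to 0$, and Pascal's rule is exactly the routine argument being left to the reader. For (ii) the paper itself only invokes \cite[Corollary 4.3]{CQ11} (together with the classical standardness of parameter ideals inside $\fm^{2n_0}$ from \cite{CST78,T86}), which is the same reliance you make, so there is no gap.
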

\begin{proof} (i) is trivial, and (ii) follows from \cite[Corollary 4.3]{CQ11}.
  \end{proof}
\begin{lemma}\label{L4.2}
  Let $M$ be a generalized Cohen-Macaulay module of dimension
$d$ and $\overline{M} = M/H^0_{\frak m}(M)$. Let $\frak q = (x_1, ...,x_d)$ be a standard parameter ideal of $M$ satisfying that $\mathrm{ir}_M(\frak q) = \sum_{i = 0}^d \binom{d}{i}s_i(M)$. Then
$$\mathrm{ir}_M(\frak q^{n+1}M) = \mathrm{ir}_{\overline{M}}(\frak q^{n+1}\overline{M}) + s_0(R)$$
for all $n \ge 0$.
\end{lemma}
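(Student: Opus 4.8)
The plan is to split into the cases $n\ge 1$ and $n=0$. For $n\ge 1$ the asserted identity is exactly Lemma~\ref{L3.1}, which holds for every standard parameter ideal and makes no use of the extremal hypothesis, so nothing remains to prove in that range. The whole content of the lemma therefore lies in the case $n=0$, that is, in the equality $\mathrm{ir}_M(\frak q)=\mathrm{ir}_{\overline M}(\frak q)+s_0(M)$, and this is where the assumption $\mathrm{ir}_M(\frak q)=\sum_{i=0}^d\binom{d}{i}s_i(M)$ comes in.

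First I would record that $H^0_{\frak m}(M)\cap\frak qM=0$. Since $\frak q$ is standard, $\frak qH^0_{\frak m}(M)=0$; in particular $x_1H^0_{\frak m}(M)=0$, so $H^0_{\frak m}(M)\subseteq 0:_M x_1$, and hence $H^0_{\frak m}(M)\cap\frak qM\subseteq(0:_M x_1)\cap\frak qM=0$ by Lemma~\ref{L2.2}(ii). Reducing the short exact sequence $0\to H^0_{\frak m}(M)\to M\to\overline M\to 0$ modulo $\frak q$ and using $\frak qH^0_{\frak m}(M)=0$ together with the vanishing just obtained, the induced map $H^0_{\frak m}(M)\to M/\frak qM$ is injective, so
$$0\longrightarrow H^0_{\frak m}(M)\longrightarrow M/\frak qM\longrightarrow\overline M/\frak q\overline M\longrightarrow 0$$
is exact. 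Applying $\Hom_R(\frak k,-)$ gives the left exact sequence $0\to\mathrm{Soc}(H^0_{\frak m}(M))\to\mathrm{Soc}(M/\frak qM)\to\mathrm{Soc}(\overline M/\frak q\overline M)$, and therefore $\mathrm{ir}_M(\frak q)\le s_0(M)+\mathrm{ir}_{\overline M}(\frak q)$.

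To finish I would bound $\mathrm{ir}_{\overline M}(\frak q)$ from above. The module $\overline M$ is again generalized Cohen--Macaulay, with $H^0_{\frak m}(\overline M)=0$ and $H^i_{\frak m}(\overline M)\cong H^i_{\frak m}(M)$ for $i\ge 1$, so $s_0(\overline M)=0$ and $s_i(\overline M)=s_i(M)$ for $i\ge 1$; moreover $\frak q$ is a standard parameter ideal of $\overline M$ (the comparison local cohomology modules along the sequence above all have finite length, so the defining vanishing conditions pass from $M$ to $\overline M$). The Goto--Suzuki bound (Remark~\ref{R2.7}(ii)) applied to $\overline M$ then gives $\mathrm{ir}_{\overline M}(\frak q)\le\sum_{i=0}^d\binom{d}{i}s_i(\overline M)=\sum_{i=1}^d\binom{d}{i}s_i(M)$, whence
$$\mathrm{ir}_M(\frak q)\le s_0(M)+\sum_{i=1}^d\binom{d}{i}s_i(M)=\sum_{i=0}^d\binom{d}{i}s_i(M).$$
By hypothesis the two ends coincide, so every inequality above is an equality; in particular $\mathrm{ir}_M(\frak q)=s_0(M)+\mathrm{ir}_{\overline M}(\frak q)$, which is precisely the case $n=0$. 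The one step needing genuine care is the verification that a standard system of parameters restricts to a standard one on $M/H^0_{\frak m}(M)$ (equivalently, that the reduced sequence above remains short exact after tensoring with $R/\frak q$); once this and the colon identity from Lemma~\ref{L2.2} are in hand, the rest is simply a squeeze of the Goto--Suzuki bound against the extremal value assumed in the statement.
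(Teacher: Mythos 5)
Your proposal is correct and follows essentially the same route as the paper: for $n\ge 1$ the identity is exactly Lemma \ref{L3.1}, and for $n=0$ one squeezes the inequality $\mathrm{ir}_M(\frak q)\le \mathrm{ir}_{\overline M}(\frak q)+s_0(M)$ against the Goto--Suzuki bound $\mathrm{ir}_{\overline M}(\frak q)\le\sum_{i=1}^d\binom{d}{i}s_i(M)$ using the extremal hypothesis. The only difference is that you spell out (via $H^0_{\frak m}(M)\cap\frak qM=0$ and the induced short exact sequence) the first inequality, which the paper simply asserts.
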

\begin{proof} The case $n>0$ was proved in Lemma \ref{L3.1}. The case $n = 0$, we have $\mathrm{ir}_M(\frak q) \le \mathrm{ir}_{\overline{M}} (\frak q) + s_0(M)$ and
$$\mathrm{ir}_{\overline{M}} (\frak q) \le  \sum_{i = 0}^d \binom{d}{i}s_i(\overline{M}) =  \sum_{i = 1}^d \binom{d}{i}s_i(M).$$
Thus $\mathrm{ir}_{\overline{M}} (\frak q) =  \sum_{i = 1}^d \binom{d}{i}s_i(M)$ and $\mathrm{ir}_M(\frak q) = \mathrm{ir}_{\overline{M}} (\frak q) + s_0(M)$.
\end{proof}
Similarly Lemma \ref{L3.3} we have the following result.
\begin{lemma} \label{L4.3}
  Let $M$ be a generalized Cohen-Macaulay module of dimension
$d>1$ and $\frak q = (x_1, ...,x_d)$  a standard parameter ideal of $M$ satisfying that $\mathrm{ir}_M(\frak q) = \sum_{i = 0}^d \binom{d}{i}s_i(M)$. Set $M' = M/x_1M$ and $\frak q' = (x_2, ..., x_d)$. Then
$$\mathrm{ir}_M(\frak q^{n+1}M) - \mathrm{ir}_M(\frak q^{n}M) \le \mathrm{ir}_{M'}((\frak q')^{n+1}M') - (s_0(M) + s_1(M))$$
for all $n \ge 1$.
\end{lemma}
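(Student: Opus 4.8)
The plan is to imitate the structure of Lemma~\ref{L3.3} --- reduce to the case of positive depth, apply the known bound there, and then compare the two quotient modules --- but now the reduction is effected through Lemma~\ref{L4.2} and the extra term $s_0(M)$ is produced in the comparison step. First, since $\frak q$ is standard and $\mathrm{ir}_M(\frak q) = \sum_{i=0}^d\binom{d}{i}s_i(M)$, Lemma~\ref{L4.2} gives $\mathrm{ir}_M(\frak q^{k}M) = \mathrm{ir}_{\overline{M}}(\frak q^{k}\overline{M}) + s_0(M)$ for all $k \ge 1$, where $\overline{M} = M/H^0_{\frak m}(M)$; hence $\mathrm{ir}_M(\frak q^{n+1}M) - \mathrm{ir}_M(\frak q^{n}M) = \mathrm{ir}_{\overline{M}}(\frak q^{n+1}\overline{M}) - \mathrm{ir}_{\overline{M}}(\frak q^{n}\overline{M})$ for $n \ge 1$. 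Now $\overline{M}$ is generalized Cohen-Macaulay of dimension $d > 1$ with $\depth\overline{M} > 0$ and $\frak q$ is still a standard parameter ideal of it, so Lemma~\ref{L3.3} applies to $\overline{M}$ and yields $\mathrm{ir}_{\overline{M}}(\frak q^{n+1}\overline{M}) - \mathrm{ir}_{\overline{M}}(\frak q^{n}\overline{M}) \le \mathrm{ir}_{\overline{M}'}((\frak q')^{n+1}\overline{M}') - s_1(\overline{M})$ for $n \ge 1$, where $\overline{M}' := \overline{M}/x_1\overline{M}$; and $s_1(\overline{M}) = s_1(M)$ because $H^1_{\frak m}(\overline{M}) \cong H^1_{\frak m}(M)$.

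Next I would relate $\overline{M}'$ to $M' = M/x_1M$. Both are generalized Cohen-Macaulay of dimension $d-1$ and $\frak q' = (x_2,\dots,x_d)$ is a standard parameter ideal of each; moreover $\overline{M}' = M/(x_1M + H^0_{\frak m}(M))$ is a quotient of $M'$ by a submodule of finite length, so $M'/H^0_{\frak m}(M')$ and $\overline{M}'/H^0_{\frak m}(\overline{M}')$ are one and the same module, which I call $\widetilde{M}$. Applying Lemma~\ref{L3.1} to $M'$ and to $\overline{M}'$ (for exponents $\ge 2$, which is all that is needed as $n \ge 1$) gives $\mathrm{ir}_{M'}((\frak q')^{n+1}M') = \mathrm{ir}_{\widetilde{M}}((\frak q')^{n+1}\widetilde{M}) + s_0(M')$ and $\mathrm{ir}_{\overline{M}'}((\frak q')^{n+1}\overline{M}') = \mathrm{ir}_{\widetilde{M}}((\frak q')^{n+1}\widetilde{M}) + s_0(\overline{M}')$, whence $\mathrm{ir}_{\overline{M}'}((\frak q')^{n+1}\overline{M}') = \mathrm{ir}_{M'}((\frak q')^{n+1}M') - \bigl(s_0(M') - s_0(\overline{M}')\bigr)$. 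Here the extremal hypothesis enters through Lemma~\ref{L4.1}(i), which gives $s_0(M') = s_0(M) + s_1(M)$; on the other hand $x_1$ is a nonzerodivisor on $\overline{M}$ and $x_1 H^1_{\frak m}(\overline{M}) = 0$ (as $\frak q$ is standard and $1 < d$), so $0 \to \overline{M} \xrightarrow{\,x_1\,} \overline{M} \to \overline{M}' \to 0$ gives $H^0_{\frak m}(\overline{M}') \cong H^1_{\frak m}(\overline{M}) \cong H^1_{\frak m}(M)$, and therefore $s_0(\overline{M}') = s_1(M)$. Consequently $s_0(M') - s_0(\overline{M}') = s_0(M)$, i.e.\ $\mathrm{ir}_{\overline{M}'}((\frak q')^{n+1}\overline{M}') = \mathrm{ir}_{M'}((\frak q')^{n+1}M') - s_0(M)$ for all $n \ge 1$.

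Combining the two steps, for every $n \ge 1$ we obtain $\mathrm{ir}_M(\frak q^{n+1}M) - \mathrm{ir}_M(\frak q^{n}M) \le \mathrm{ir}_{\overline{M}'}((\frak q')^{n+1}\overline{M}') - s_1(M) = \mathrm{ir}_{M'}((\frak q')^{n+1}M') - s_0(M) - s_1(M)$, which is the assertion. I expect the main obstacle to be the bookkeeping in the comparison step: one must verify that $\overline{M}'$ inherits both the generalized Cohen-Macaulay property and the standardness of $\frak q'$ so that Lemma~\ref{L3.1} is applicable, that the quotients $M'/H^0_{\frak m}(M')$ and $\overline{M}'/H^0_{\frak m}(\overline{M}')$ genuinely coincide, and that $H^0_{\frak m}(\overline{M}')$ is correctly identified with $H^1_{\frak m}(M)$; it is exactly these facts, together with the equality $s_0(M') = s_0(M) + s_1(M)$ from Lemma~\ref{L4.1}(i), that produce the summand $s_0(M)$. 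One can also try to imitate the proof of Lemma~\ref{L3.3} directly, using that $0 :_M x_1 = H^0_{\frak m}(M)$ (it contains $H^0_{\frak m}(M)$ since $\frak q$ is standard, and is contained in it because its image in $\overline{M}$ is annihilated by the nonzerodivisor $x_1$), so that Lemma~\ref{L2.2}(i) gives $\frak q^{n+1}M :_R x_1 = \frak q^{n}M + H^0_{\frak m}(M)$ with $H^0_{\frak m}(M) \cap \frak q^{n}M = 0$ and feeding the resulting four-term exact sequence into $\mathrm{Hom}_R(R/\frak m,-)$; there the delicate point is precisely to show, with the help of the extremal hypothesis, that the relevant cokernel still has length at least $s_1(M)$.
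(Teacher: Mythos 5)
Your proposal is correct, but it follows a genuinely different route from the paper. The paper proves the lemma by redoing, for $M$ itself, the colon-ideal computation of Lemma~\ref{L3.3}: from $\frak q^{n+1}M :_M x_1 = \frak q^nM + H^0_{\frak m}(M)$ it builds the exact sequence $0 \to \overline{M}/\frak q^n\overline{M} \to M/\frak q^{n+1}M \to M'/(\frak q')^{n+1}M' \to 0$, applies $\mathrm{Hom}_R(R/\frak m,\bullet)$, proves the Claim $(x_1M:_M\frak m)\cap(\frak q^{n+1}M:_M\frak m)\subseteq x_1M+(0:_M\frak m)$ to bound the cokernel length below by $s_1(M)$, and then converts $\mathrm{ir}_{\overline{M}}(\frak q^n\overline{M})$ back to $\mathrm{ir}_M(\frak q^nM)-s_0(M)$ via Lemma~\ref{L4.2}. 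You instead use Lemma~\ref{L3.3} as a black box on $\overline{M}$ and transfer the inequality by pure socle bookkeeping: Lemma~\ref{L4.2} on both powers, Lemma~\ref{L3.1} applied to $M'$ and to $\overline{M}'=\overline{M}/x_1\overline{M}$ together with the identification $M'/H^0_{\frak m}(M')=\overline{M}'/H^0_{\frak m}(\overline{M}')$, and the counts $s_0(M')=s_0(M)+s_1(M)$ (Lemma~\ref{L4.1}(i), where the extremal hypothesis enters) and $s_0(\overline{M}')\cong \mathrm{Soc}(H^1_{\frak m}(M))$, so $s_0(M')-s_0(\overline{M}')=s_0(M)$. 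All the inheritance facts you invoke (standardness of $\frak q$ on $\overline{M}$ and of $\frak q'$ on $M'$, $\overline{M}'$) are standard and are used implicitly in the paper as well, so your argument is complete. What your route buys is that no new colon-ideal computation is needed. What the paper's direct route buys is its by-products: the Claim and the explicit equality criterion of Remark~\ref{R4.4} (equality in the lemma iff $(\frak q^{n+1}M+x_1M):_M\frak m=(\frak q^{n+1}M:_M\frak m)+(x_1M:_M\frak m)$), which are reused in the proofs of Lemma~\ref{L4.5} and Theorem~\ref{ThB}; your black-box derivation establishes the inequality but does not immediately yield that criterion.
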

\begin{proof}
  By Lemma \ref{L2.2} we have $\frak q^{n+1}M : x_1 = \frak q^nM + H^0_{\frak m}(M)$, so we have the short exact sequence
$$0 \to \overline{M}/\frak q^n\overline{M} \to M/\frak q^{n+1}M \to M'/(\frak q')^{n+1}M' \to 0$$
  for all $n \ge 1$, where $\overline{M} = M/H^0_{\frak m}(M)$. By applying the functor
$\mathrm{Hom}_R(R/\frak m, \bullet)$ we obtain
$$0 \to (\frak q^n\overline{M}:_{\overline{M}}\frak m)/\frak q^n\overline{M} \to
(\frak q^{n+1}M:_M\frak m)/\frak q^{n+1}M \overset{\varphi_n}{\to}
[(\frak q')^{n+1}M' :_{M'} \frak  m]/(\frak q')^{n+1}M' .$$
Therefore
$$\mathrm{ir}_M(\frak q^{n+1}M) - \mathrm{ir}_{\overline{M}}(\frak q^{n}\overline{M}) = \mathrm{ir}_{M'}((\frak q')^{n+1}M') - \ell (\mathrm{coker}(\varphi_n))$$
for all $n \ge 1$. On the other hand we have $\mathrm{im}(\varphi_n) = [(\frak q^{n+1}M:_M\frak m)+ x_1M]/(\frak q^{n+1}M+x_1M)$, hence
$$\mathrm{coker}(\varphi_n) = \frac{(\frak q^{n+1}M+x_1M):_M\frak m}{(\frak q^{n+1}M:_M \frak m)+ x_1M}.$$
Thus for all $n \ge 1$ we have
\begin{eqnarray*}
\ell(\mathrm{coker}(\varphi_n)) &\ge & \ell \big( \frac{(\frak q^{n+1}M:_M \frak m)+ (x_1M:_M \frak m)}{(\frak q^{n+1}M:_M \frak m)+ x_1M} \big)\\
&=& \ell \big( \frac{ x_1M:_M \frak m}{( x_1M:_M \frak m) \cap [(\frak q^{n+1}M:_M \frak m)+ x_1M]} \big)\\
&=& \ell \big( \frac{ x_1M:_M \frak m}{[( x_1M:_M \frak m) \cap (\frak q^{n+1}M:_M \frak m)]+ x_1M} \big).
\end{eqnarray*}
{\bf Claim.} $( x_1M:_M \frak m) \cap (\frak q^{n+1}M:_M \frak m) \subseteq x_1M + (0:_M \frak m)$.\\
Indeed, let $x \in ( x_1M:_M \frak m) \cap (\frak q^{n+1}M:_M \frak m)$. Since $\frak q^{n+1}M:_M \frak m \subseteq \frak q^{n+1}M:_M x_1 = \frak q^nM + H^0_{\frak m}(M)$ we have $x \in ( x_1M:_M \frak m) \cap (\frak qM + H^0_{\frak m}(M))$. Therefore $x = y + z$ for some $y \in \frak qM$ and $z \in H^0_{\frak m}(M)$. Since $\frak mx$ and $\frak my$ are subsets of $\frak qM$ we have $\frak mz \subseteq H^0_{\frak m}(M) \cap \frak qM = 0$ by Lemma \ref{L2.2} (ii). Hence $z \in 0:_M \frak m$. Therefore
\begin{eqnarray*}
( x_1M:_M \frak m) \cap (\frak q^{n+1}M:_M \frak m) &\subseteq & ( x_1M:_M \frak m) \cap [\frak qM + (0:_M \frak m)] \\
&= & (0:_M \frak m) + [( x_1M:_M \frak m) \cap \frak qM] = x_1M + (0:_M \frak m).
\end{eqnarray*}
The claim is proved.  \\
It is clear that $ 0:_M \frak m \subseteq ( x_1M:_M \frak m) \cap (\frak q^{n+1}M:_M \frak m)$. By the claim we have
$$ ( x_1M:_M \frak m) \cap (\frak q^{n+1}M:_M \frak m) + x_1M = x_1M + (0:_M \frak m).$$
Thus
$$\ell(\mathrm{coker}(\varphi_n)) \ge \ell \big( \frac{ x_1M:_M \frak m}{x_1M + (0:_M \frak m)} \big) = s_0(M') - s_0(M) = s_1(M).$$
By Lemma \ref{L4.2} we have $\mathrm{ir}_{\overline{M}}(\frak q^{n}\overline{M}) = \mathrm{ir}_{M}(\frak q^{n}M) - s_0(M)$. Therefore
$$\mathrm{ir}_M(\frak q^{n+1}M) - (\mathrm{ir}_{M}(\frak q^{n}M) - s_0(M)) \le \mathrm{ir}_{M'}((\frak q')^{n+1}M') - s_1(M)$$
for all $n \ge 1$. The proof is complete.
\end{proof}

\begin{remark} \label{R4.4}\rm According to the proof of Lemma \ref{L4.3} we can see that the inequality of Lemma \ref{L4.3} becomes an equality if and only if
$$(\frak q^{n+1}M+x_1M):_M\frak m = (\frak q^{n+1}M:_M \frak m)+ (x_1M:_M \frak m)$$ for all $n \ge 1$.
  \end{remark}
In order to prove Theorem \ref{ThB} need the following key lemma.
\begin{lemma}\label{L4.5}
  Let $M$ be a generalized Cohen-Macaulay module of dimension
$d>0$ and $\frak q = (x_1, ...,x_d)$ a standard parameter ideal of $M$. Suppose that
 $$\mathrm{ir}_M(\frak q^{n+1}M) = \sum_{i=1}^{d} \binom{n+d-i}{d-i} \big( \sum_{j=1}^{d-i+1} \binom{d-i}{j-1} s_j(M)\big) + s_0(M)$$
for all $n \ge 0$. Then
$$\frak q^{n+1}M :_M \frak m = \frak q^n (\frak qM :_M \frak m) + (0 :_M \frak m)$$
for all $n \ge 0$.
\end{lemma}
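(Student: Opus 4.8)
Write $J=\frak qM:_M\frak m$, so the asserted equality reads $\frak q^{n+1}M:_M\frak m=\frak q^nJ+(0:_M\frak m)$. Note first that ``$\supseteq$'' is automatic ($\frak mJ\subseteq\frak qM$ forces $\frak m\cdot\frak q^nJ\subseteq\frak q^{n+1}M$, and $\frak m$ kills $0:_M\frak m$), that the case $n=0$ is trivial, and that for $n\ge 1$ one has $\frak q^{n+1}M\subseteq\frak q^nJ$ and $\frak q^nJ\cap(0:_M\frak m)=0$, the latter because $\frak q^nJ\subseteq\frak qM$ and $\frak qM\cap H^0_{\frak m}(M)=0$ by Lemma \ref{L2.2}(ii) (recall $\frak qH^0_{\frak m}(M)=0$, as $\frak q$ is standard and $d>0$). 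Hence, for $n\ge 1$, the asserted identity is equivalent to the inequality $\ell(\frak q^nJ/\frak q^{n+1}M)+s_0(M)\ge\mathrm{ir}_M(\frak q^{n+1}M)$, the reverse inequality being a consequence of ``$\supseteq$''. I would prove this lower bound by induction on $d$, after reducing to the case $\depth M>0$; throughout, one uses that the $n=0$ instance of the hypothesis says precisely $\mathrm{ir}_M(\frak q)=\sum_{i=0}^d\binom{d}{i}s_i(M)$ (a short hockey-stick identity), so that Lemmas \ref{L4.1}, \ref{L4.2} and \ref{L4.3} apply to $\frak q$.

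\emph{Reduction to $\depth M>0$.} Set $\overline M=M/H^0_{\frak m}(M)$. By Lemma \ref{L4.2}, $\mathrm{ir}_{\overline M}(\frak q^{n+1}\overline M)=\mathrm{ir}_M(\frak q^{n+1}M)-s_0(M)$, so $\overline M$ again satisfies the hypothesis of the lemma, now with $s_0(\overline M)=0$ and $s_j(\overline M)=s_j(M)$ for $j\ge 1$; once the lemma is known for $\overline M$ we therefore have $\frak q^{n+1}\overline M:_{\overline M}\frak m=\frak q^n(\frak q\overline M:_{\overline M}\frak m)$. For $n\ge 1$ the modules $\frak q^{n+1}M$ and $\frak q^nJ$ lie in $\frak qM$ and so embed into $\overline M$, with images $\frak q^{n+1}\overline M$ and $\frak q^n\overline{J}_0$, where $\overline{J}_0$ is the image of $J$; thus $\ell(\frak q^nJ/\frak q^{n+1}M)=\ell(\frak q^n\overline{J}_0/\frak q^{n+1}\overline M)$, and it suffices to show $\overline{J}_0=\frak q\overline M:_{\overline M}\frak m$. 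I would check this by a length count: Lemma \ref{L2.2}(ii) gives $J\cap(\frak qM+H^0_{\frak m}(M))=\frak qM+(0:_M\frak m)$, whence $\ell_{\frak k}(\overline{J}_0/\frak q\overline M)=\mathrm{ir}_M(\frak q)-s_0(M)=\mathrm{ir}_{\overline M}(\frak q)$ by Lemma \ref{L4.2}; since $\overline{J}_0\subseteq\frak q\overline M:_{\overline M}\frak m$, the two coincide.

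\emph{The case $\depth M>0$.} Then $0:_M\frak m=0$ and the goal is $\frak q^{n+1}M:_M\frak m=\frak q^nJ$; I induct on $d$. If $d=1$, then $M$ is Cohen--Macaulay, $x_1$ is a nonzerodivisor, and multiplication by $x_1^n$ carries $J/\frak qM$ isomorphically onto $\frak q^nJ/\frak q^{n+1}M$, which thus has length $\mathrm{ir}_M(\frak q)=\mathrm{ir}_M(\frak q^{n+1}M)$ (both equal $s_1(M)$, by the hypothesis); this gives the claim. Now suppose $d\ge 2$ and put $M'=M/x_1M$, $\frak q'=(x_2,\dots,x_d)$, a standard parameter ideal of $M'$ with $\dim M'=d-1$. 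The crucial preliminary step is that $M'$ inherits the hypothesis of the lemma, namely that $\mathrm{ir}_{M'}((\frak q')^{n+1}M')$ attains the extremal value for every $n$: for $n=0$ this is Lemma \ref{L4.1}(i); for $n\ge 1$ one sums the inequality of Lemma \ref{L3.3} over $1\le n\le N$, estimates the right-hand side using Theorem \ref{ThA} for $M'$ and $s_j(M')\le s_j(M)+s_{j+1}(M)$, and notes that the resulting bound collapses (via the very computation in the proof of Theorem \ref{ThA}, using $s_j(M')=s_j(M)+s_{j+1}(M)$ and $s_0(M')=s_1(M)$) to $\mathrm{ir}_M(\frak q^{N+1}M)-\mathrm{ir}_M(\frak qM)$; by the hypothesis on $M$, every intermediate inequality --- in particular each instance of Lemma \ref{L3.3} --- must be an equality. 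Tightness of Lemma \ref{L3.3} yields, as in Remark \ref{R4.4}, that $(\frak q^{n+1}M+x_1M):_M\frak m=(\frak q^{n+1}M:_M\frak m)+(x_1M:_M\frak m)$ for $n\ge 1$, while the induction hypothesis for $M'$, transcribed along $M\to M'$, gives $(\frak q^{n+1}M+x_1M):_M\frak m=\frak q^nJ+(x_1M:_M\frak m)$ for all $n\ge 0$. Cancelling the common summand $x_1M:_M\frak m$ is the last step: for $a\in\frak q^{n+1}M:_M\frak m$, write $a=b+c$ with $b\in\frak q^nJ$ and $c\in x_1M:_M\frak m$; then $c=a-b\in(\frak q^{n+1}M:_M\frak m)\cap(x_1M:_M\frak m)\subseteq x_1M$ by the Claim in the proof of Lemma \ref{L4.3} together with Lemma \ref{L2.2}, so $c=x_1m$ with $m\in\frak q^nM:_M\frak m$ by Lemma \ref{L2.2}(i); the induction on $n$ (its case $n-1$, namely $\frak q^nM:_M\frak m=\frak q^{n-1}J$) then gives $c=x_1m\in\frak q^nJ$, hence $a\in\frak q^nJ$.

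The step I expect to be the main obstacle is the preliminary one for $d\ge 2$: that the extremal property descends from $M$ to $M'=M/x_1M$ for all $n$ at once. This is not formal --- it forces one to reopen the chain of inequalities in the proof of Theorem \ref{ThA}, verify the combinatorial identity making its two ends equal, and extract from this not only that $M'$ is again extremal but also that Lemma \ref{L3.3} is tight, the latter being precisely what produces the colon identity $(\frak q^{n+1}M+x_1M):_M\frak m=(\frak q^{n+1}M:_M\frak m)+(x_1M:_M\frak m)$ on which the cancellation rests. The bookkeeping in the $\depth M=0$ reduction --- keeping track of how $0:_M\frak m$ and $H^0_{\frak m}(M)$ sit inside the colon modules --- is the second, more routine, point requiring care.
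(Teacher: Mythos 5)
Your argument is correct and follows essentially the same route as the paper's proof: deduce from the hypothesis (via Theorem \ref{ThA} and Lemma \ref{L4.1}) that $M'=M/x_1M$ is again extremal for all $n$ and that the inequality of Lemma \ref{L3.3}/\ref{L4.3} is tight, convert tightness into the colon identity of Remark \ref{R4.4}, transcribe the dimension-$(d-1)$ case along $M\to M'$, and cancel the summand $x_1M:_M\frak m$ using the Claim from Lemma \ref{L4.3} and Lemma \ref{L2.2}, with an inner induction on $n$. The only differences are organizational (your upfront reduction modulo $H^0_{\frak m}(M)$ and length count, versus the paper carrying $H^0_{\frak m}(M)$ through the argument, and your telescoped derivation of extremality for $M'$ versus the paper's single-step comparison), and they do not affect correctness.
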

\begin{proof} We will proceed by induction on $d$. If $d = 1$ and $\frak q = (x)$. We only need to prove that $x^{n+1}M :_M \frak m \subseteq x^n(xM:_M \frak m) + 0:_M \frak m$ for all $n \ge 0$. The case $n = 0$ is trivial so we can assume that $n \ge 1$. Let $a \in x^{n+1}M :_M \frak m$. Since $x^{n+1}M:_M \frak m \subseteq x^{n+1}M:_M x = x^nM + H^0_{\frak m}(M)$, we have $a = x^nb + c$ for some $b \in M$ and $c \in H^0_{\frak m}(M)$. Since $\frak ma \subseteq x^{n+1}M$ and $\frak m x^nb \subseteq x^nM$ we have $\frak mc \subseteq xM \cap H^0_{\frak m}(M) = 0$. Thus $c \in 0:_M \frak m$. Therefore $x^n \frak m b  = \frak m a \subseteq x^{n+1}M$. Hence $\frak mb \subseteq xM + H^0_{\frak m}(M)$. On the other hand by Lemma \ref{L4.2} we have $\mathrm{ir}_M(x) = \mathrm{ir}_{M/H^0_{\frak m}(M)}(x) + s_0(M)$. So the map
$$\mathrm{Soc}(M/xM) \to \mathrm{Soc}(M/(xM+H^0_{\frak m}(M)) $$
is surjective.Therefore $ b \in (xM + H^0_{\frak m}(M)):_M \frak m = (xM:_M \frak m) + H^0_{\frak m}(M))$. Thus $b = d + e$ with some $d \in xM:_M \frak m$ and $e \in H^0_{\frak m}(M)$. Conclusion, we have
$$a = x^{n}(d+e) + c = x^n d + c \in x^n(xM:_M \frak m) + 0:_M \frak m.$$
Hence $x^{n+1}M :_M \frak m \subseteq x^n(xM:_M \frak m) + 0:_M \frak m$ as desired.\\
Suppose $d>1$ and the assertion is proved for $d-1$. Let $M' = M/x_1M$ and $\frak q' = (x_2, ..., x_d)$. By Lemma \ref{L4.1} we have $\mathrm{ir}_{M'}(\frak q') = \sum_{i=0}^{d-1} \binom{d-1}{i}s_i(M')$. By Lemma \ref{L4.3}, for all $n \ge 1$, we have
$$\mathrm{ir}_M(\frak q^{n+1}M) - \mathrm{ir}_M(\frak q^{n}M) \le \mathrm{ir}_{M'}((\frak q')^{n+1}M') - (s_0(M) + s_1(M)) = \mathrm{ir}_{M'}((\frak q')^{n+1}M') - s_0(M').$$

By our assumption that
$$\mathrm{ir}_M(\frak q^{n+1}M) = \sum_{i=1}^{d} \binom{n+d-i}{d-i} \big( \sum_{j=1}^{d-i+1} \binom{d-i}{j-1} s_j(M)\big) + s_0(M)$$
for all $n \ge 0$ we have
$$\mathrm{ir}_{M'}((\frak q')^{n+1}M') \ge \sum_{i=1}^{d-1} \binom{n+d-1-i}{d-1-i} \big( \sum_{j=1}^{d-i} \binom{d-1-i}{j-1} s_j(M')\big) + s_0(M')$$
for all $n \ge 0$. Combining with Theorem \ref{ThA} we have
$$\mathrm{ir}_{M'}((\frak q')^{n+1}M') = \sum_{i=1}^{d-1} \binom{n+d-1-i}{d-1-i} \big( \sum_{j=1}^{d-i} \binom{d-1-i}{j-1} s_j(M')\big) + s_0(M')$$
for all $n \ge 0$, and
$$\mathrm{ir}_M(\frak q^{n+1}M) - \mathrm{ir}_M(\frak q^{n}M) = \mathrm{ir}_{M'}((\frak q')^{n+1}M') - (s_0(M) + s_1(M)).$$

 Now by the inductive hypothesis we have
$$(x_1M + \frak q^{n+1}M):_M \frak m = \frak q^n (\frak qM :_M \frak m) + (x_1M :_M \frak m)$$
for all $n \ge 0$. On the other hand by Remark \ref{R4.4} we have
$$(x_1M + \frak q^{n+1}M):_M \frak m = (\frak q^{n+1}M :_M \frak m) + (x_1M :_M \frak m)$$
for all $n \ge 0$. Therefore
$$(\frak q^{n+1}M :_M \frak m) + (x_1M :_M \frak m) = \frak q^n (\frak qM :_M \frak m) + (x_1M :_M \frak m)$$
for all $n \ge 0$. Hence
$$(\frak q^{n+1}M :_M \frak m) = \frak q^n (\frak qM :_M \frak m) + [(x_1M :_M \frak m) \cap (\frak q^{n+1}M :_M \frak m)]$$
By the Claim of the proof of Lemma \ref{L4.3} we have
$$(\frak q^{n+1}M :_M \frak m) \subseteq \frak q^n (\frak qM :_M \frak m) + (0:_M \frak m) + x_1M.$$ We are now ready to prove $\frak q^{n+1}M :_M \frak m = \frak q^n (\frak qM :_M \frak m) + (0 :_M \frak m)$ for all $n \ge 0$ by induction on $n$. The case $n = 0$ is trivial. For $n \ge 1$ we have
\begin{eqnarray*}
\frak q^{n+1}M :_M \frak m  &=& \frak q^n (\frak qM :_M \frak m) + (0:_M \frak m) + [(\frak q^{n+1}M :_M \frak m) \cap x_1M]\\
&=& \frak q^n (\frak qM :_M \frak m) + (0:_M \frak m) +  x_1(\frak q^{n+1}M :_M (x_1\frak m))\\
&=& \frak q^n (\frak qM :_M \frak m) + (0:_M \frak m) +  x_1[(\frak q^nM + H^0_{\frak m}(M)) :_M \frak m)] \,\, \text{by Lemma \ref{L2.2}}\\
&=& \frak q^n (\frak qM :_M \frak m) + (0:_M \frak m) +  x_1[(\frak q^nM  :_M \frak m) + H^0_{\frak m}(M)] \,\, \text{by Lemma \ref{L4.2}}\\
&=& \frak q^n (\frak qM :_M \frak m) + (0:_M \frak m) +  x_1(\frak q^nM  :_M \frak m) \,\,\text{by inductive hypothesis}\\
&=& \frak q^n (\frak qM :_M \frak m) + (0:_M \frak m) +  x_1[\frak q^{n-1}(\frak qM  :_M \frak m) + (0:_M \frak m)] \\
 &=& \frak q^n (\frak qM :_M \frak m) + (0:_M \frak m).
\end{eqnarray*}
The proof is complete.
\end{proof}

We prove the main result of this section.
\begin{proof}[Proof of Theorem \ref{ThB}] The case $n =0$ is trivial since
$$\sum_{i=0}^d \binom{d}{i}s_i(M) = \sum_{i=1}^{d} \big( \sum_{j=1}^{d-i+1} \binom{d-i}{j-1} s_j(M)\big) + s_0(M).$$
 Assume that $n>0$, we prove by induction of $d$. If $d=1$ the assertion is clear. Suppose $d>1$ and we have proved the assertion for $d-1$. Let $M' = M/x_1M$ and $\frak q' = (x_2, ..., x_d)$. By Lemma \ref{L4.1} we have $\mathrm{ir}_{M'}(\frak q') = \sum_{i=0}^{d-1} \binom{d-1}{i}s_i(M')$. So by induction we have
$$\mathrm{ir}_{M'}((\frak q')^{n+1}M') = \sum_{i=1}^{d-1} \binom{n+d-1-i}{d-1-i} \big( \sum_{j=1}^{d-i} \binom{d-1-i}{j-1} s_j(M')\big) + s_0(M')$$
for all $n \ge 0$. By Lemma \ref{L4.5} we have
$$(x_1M + \frak q^{n+1}M):_M \frak m = \frak q^n (\frak qM :_M \frak m) + (x_1M :_M \frak m) \subseteq (\frak q^{n+1} M :_M \frak m) + (x_1M :_M \frak m).$$
Therefore $(x_1M + \frak q^{n+1}M):_M \frak m = (\frak q^{n+1} M :_M \frak m) + (x_1M :_M \frak m)$ for all $n \ge 0$. Now Remark \ref{R4.4} implies that
$$\mathrm{ir}_M(\frak q^{n+1}M) - \mathrm{ir}_M(\frak q^{n}M) = \mathrm{ir}_{M'}((\frak q')^{n+1}M') - (s_0(M) + s_1(M))$$
for all $n \ge 0$. Thus for all $n>0$ we have
$$\mathrm{ir}_M(\frak q^{n+1}M) - \mathrm{ir}_M(\frak q) = \sum_{k=1}^n[\mathrm{ir}_{M'}((\frak q')^{k+1}M') - (s_0(M) + s_1(M))].$$
By the same combinatorial transformations used in the proof of Theorem \ref{ThA} we have
 $$\mathrm{ir}_M(\frak q^{n+1}M) = \sum_{i=1}^{d} \binom{n+d-i}{d-i} \big( \sum_{j=1}^{d-i+1} \binom{d-i}{j-1} s_j(M)\big) + s_0(M)$$
for all $n \ge 0$. The proof is complete.
\end{proof}
\begin{corollary}\label{C4.7} Let $M$ be a generalized Cohen-Macaulay
module of dimension $d$. Let $\frak q$ be a standard
parameter ideal such that $\mathrm{ir}_M(\frak q) = \sum_{i=0}^d \binom{d}{i}s_i(M)$.
Then $$\frak q^{n+1}M :_R \frak  m  = \frak q^{n} (\frak qM :_R \frak m)
+ (0 :_M \frak  m)$$ for all $n \geq 0$.
\end{corollary}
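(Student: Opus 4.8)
The statement to prove is Corollary \ref{C4.7}: under the standing hypotheses (generalized Cohen-Macaulay $M$, standard parameter ideal $\frak q$ with $\mathrm{ir}_M(\frak q) = \sum_{i=0}^d \binom{d}{i}s_i(M)$), we have $\frak q^{n+1}M :_R \frak m = \frak q^n(\frak q M :_R \frak m) + (0 :_M \frak m)$ for all $n \geq 0$.

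The plan is to deduce this directly from the combination of Theorem \ref{ThB} and Lemma \ref{L4.5}. Indeed, Theorem \ref{ThB} asserts that the extremal hypothesis $\mathrm{ir}_M(\frak q) = \sum_{i=0}^d \binom{d}{i}s_i(M)$ forces the formula
$$\mathrm{ir}_M(\frak q^{n+1}M) = \sum_{i=1}^{d} \binom{n+d-i}{d-i} \big( \sum_{j=1}^{d-i+1} \binom{d-i}{j-1} s_j(M)\big) + s_0(M)$$
to hold for all $n \ge 0$. But this is precisely the hypothesis of Lemma \ref{L4.5}, whose conclusion is exactly $\frak q^{n+1}M :_M \frak m = \frak q^n(\frak q M :_M \frak m) + (0 :_M \frak m)$ for all $n \ge 0$. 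Finally, since $\frak q^{n+1}M$, $\frak q M$ and $(0)$ are all submodules of $M$ and $\frak m$ is an ideal of $R$, the colon $:_R \frak m$ applied to any such submodule agrees with the colon $:_M \frak m$ as a submodule of $M$; so the asserted identity is literally the conclusion of Lemma \ref{L4.5}, up to this harmless change of notation.

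Concretely, I would write: first invoke Theorem \ref{ThB} to convert the extremal hypothesis on $\mathrm{ir}_M(\frak q)$ into the polynomial formula for $\mathrm{ir}_M(\frak q^{n+1}M)$ valid for all $n \ge 0$; then feed that formula into Lemma \ref{L4.5} to obtain the colon identity; then note $:_R\frak m = :_M\frak m$ on submodules of $M$ to match the stated form. There is essentially no obstacle here — the corollary is a bookkeeping consequence of the two main tools already established. The only point meriting a word is that Theorem \ref{ThB} and Lemma \ref{L4.5} are being used in tandem: Theorem \ref{ThB} supplies the hypothesis that Lemma \ref{L4.5} needs, so one cannot simply cite Lemma \ref{L4.5} in isolation. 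This is a short proof and I would present it in two or three lines.

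\begin{proof}[Proof of Corollary \ref{C4.7}] By Theorem \ref{ThB}, the hypothesis $\mathrm{ir}_M(\frak q) = \sum_{i=0}^d \binom{d}{i}s_i(M)$ implies
$$\mathrm{ir}_M(\frak q^{n+1}M) = \sum_{i=1}^{d} \binom{n+d-i}{d-i} \big( \sum_{j=1}^{d-i+1} \binom{d-i}{j-1} s_j(M)\big) + s_0(M)$$
for all $n \ge 0$. This is exactly the hypothesis of Lemma \ref{L4.5}, and therefore
$$\frak q^{n+1}M :_M \frak m = \frak q^n(\frak q M :_M \frak m) + (0 :_M \frak m)$$
for all $n \ge 0$. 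Since $\frak q^{n+1}M$, $\frak qM$ and $(0)$ are submodules of $M$, for any such submodule $N$ one has $N :_R \frak m = N :_M \frak m$ as a submodule of $M$; rewriting the displayed identity accordingly gives $\frak q^{n+1}M :_R \frak m = \frak q^n(\frak q M :_R \frak m) + (0 :_M \frak m)$ for all $n \ge 0$, as claimed.
\end{proof}
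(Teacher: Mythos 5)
Your proof is correct and is exactly the paper's argument: the paper's own proof of Corollary \ref{C4.7} is simply the citation of Theorem \ref{ThB} together with Lemma \ref{L4.5}, with Theorem \ref{ThB} supplying the hypothesis that Lemma \ref{L4.5} requires, just as you spelled out. The only difference is that you made the chaining explicit (and noted the harmless $:_R$ versus $:_M$ notational point), which the paper leaves implicit.
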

\begin{proof} It follows from Lemma \ref{L4.5} and Theorem \ref{ThB}.
\end{proof}
\begin{corollary}\label{C4.8}
Let $M$ be a generalized Cohen-Macaulay
module of dimension $d$. Let $n_0$ be a positive integer such that $\frak m^{n_0}H^i_{\frak m}(M) = 0$ for all $i = 0, ..., d-1$. Then for all parameter ideals $\frak q \subseteq \frak m^{2n_0}$ we have
 $$\mathrm{ir}_M(\frak q^{n+1}M) = \sum_{i=1}^{d} \binom{n+d-i}{d-i} \big( \sum_{j=1}^{d-i+1} \binom{d-i}{j-1} s_j(M)\big) + s_0(M)$$
for all $n \ge 0$.
\end{corollary}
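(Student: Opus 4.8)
The plan is to read off Corollary \ref{C4.8} directly from Lemma \ref{L4.1}(ii) and Theorem \ref{ThB}; essentially all of the work has already been done in the proof of Theorem \ref{ThB}. First I would observe that the hypothesis on $n_0$ is verbatim the one appearing in Lemma \ref{L4.1}(ii): $\fm^{n_0}H^i_{\fm}(M) = 0$ for all $i = 0, \ldots, d-1$. Therefore, for every parameter ideal $\fq \subseteq \fm^{2n_0}$, Lemma \ref{L4.1}(ii) guarantees simultaneously that $\fq$ is standard and that $\mathrm{ir}_M(\fq) = \sum_{i=0}^{d}\binom{d}{i}s_i(M)$, i.e. that $\fq$ satisfies the extremal condition.

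Next, assuming $d > 0$, I would simply invoke Theorem \ref{ThB} for this $\fq$: its hypotheses are now met, and its conclusion is exactly the asserted formula
$$\mathrm{ir}_M(\fq^{n+1}M) = \sum_{i=1}^{d}\binom{n+d-i}{d-i}\Big(\sum_{j=1}^{d-i+1}\binom{d-i}{j-1}s_j(M)\Big) + s_0(M)$$
for all $n \ge 0$. The degenerate case $d = 0$ has to be handled separately, since Theorem \ref{ThB} is stated only for $d > 0$: here $M = H^0_{\fm}(M)$ has finite length, and the inclusion $\fq \subseteq \fm^{2n_0} \subseteq \fm^{n_0}$ together with $\fm^{n_0}M = 0$ forces $\fq^{n+1}M = 0$, whence $\mathrm{ir}_M(\fq^{n+1}M) = \dim_{\frak k}\mathrm{Soc}(M) = s_0(M)$, which matches the stated expression (the outer sum being empty).

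The only point that genuinely needs checking is the bookkeeping of hypotheses — that the exponent $2n_0$ in Corollary \ref{C4.8} is precisely the one demanded in Lemma \ref{L4.1}(ii) — so there is no real obstacle in this argument. The substance resides entirely in Theorem \ref{ThB}, and in particular in the key colon identity $\fq^{n+1}M :_M \fm = \fq^n(\fq M :_M \fm) + (0 :_M \fm)$ of Lemma \ref{L4.5}, on which that theorem rests.
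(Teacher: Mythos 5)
Your proposal is correct and coincides with the paper's own proof, which likewise deduces the corollary directly from Lemma \ref{L4.1}(ii) and Theorem \ref{ThB}; your separate treatment of the degenerate case $d=0$ is a harmless extra remark not needed in the paper's setting.
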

\begin{proof} It follows from Lemma \ref{L4.1} (ii) and Theorem \ref{ThB}.
\end{proof}

\begin{corollary}
  Let $M$ be a generalized Cohen-Macaulay
module of dimension $d$ and $\frak q$ a standard parameter ideal of $M$. Put $\overline{M} = M/H^0_{\frak m}(M)$. Suppose that
$\mathrm{ir}_{\overline{M}}(\frak q) = \sum_{i=1}^d \binom{d}{i}s_i(M)$. Then we have
$$\mathrm{ir}_M(\frak q^{n+1}M) = \sum_{i=1}^{d} \binom{n+d-i}{d-i} \big( \sum_{j=1}^{d-i+1} \binom{d-i}{j-1} s_j(M)\big) + s_0(M)$$
for all $n \ge 1$.
\end{corollary}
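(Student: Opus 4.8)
The plan is to deduce this corollary from Theorem \ref{ThB} applied not to $M$ itself but to the module $\overline{M} = M/H^0_{\fm}(M)$, which has positive depth, together with the reduction provided by Lemma \ref{L3.1}.

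First I would collect the elementary facts needed to pass from $M$ to $\overline{M}$. Since $M$ is generalized Cohen-Macaulay, $H^0_{\fm}(M)$ has finite length; hence $\overline{M}$ is again generalized Cohen-Macaulay with $\dim\overline{M} = d$ and $\depth\overline{M} > 0$, and $\fq$ remains a standard parameter ideal of $\overline{M}$ (cf.\ \cite{T86}; this is already used implicitly in the proofs of Theorem \ref{ThA} and of Lemma \ref{L4.2}). From the exact sequence $0 \to H^0_{\fm}(M) \to M \to \overline{M} \to 0$ one obtains $H^i_{\fm}(\overline{M}) \cong H^i_{\fm}(M)$ for $i \ge 1$ and $H^0_{\fm}(\overline{M}) = 0$, so that $s_0(\overline{M}) = 0$ and $s_i(\overline{M}) = s_i(M)$ for all $i \ge 1$. (The case $d = 0$ is trivial, so I may assume $d > 0$.)

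Next I would verify that $\overline{M}$ satisfies the extremal hypothesis of Theorem \ref{ThB}: using $s_0(\overline{M}) = 0$, the assumption $\mathrm{ir}_{\overline{M}}(\fq) = \sum_{i=1}^d \binom{d}{i}s_i(M)$ rewrites as $\mathrm{ir}_{\overline{M}}(\fq) = \sum_{i=0}^d \binom{d}{i}s_i(\overline{M})$. Then Theorem \ref{ThB}, applied to the generalized Cohen-Macaulay module $\overline{M}$ and its standard parameter ideal $\fq$, gives
\[
\mathrm{ir}_{\overline{M}}(\fq^{n+1}\overline{M}) = \sum_{i=1}^{d}\binom{n+d-i}{d-i}\Big(\sum_{j=1}^{d-i+1}\binom{d-i}{j-1}s_j(\overline{M})\Big) + s_0(\overline{M})
\]
for all $n \ge 0$; substituting $s_0(\overline{M}) = 0$ and $s_j(\overline{M}) = s_j(M)$ for $j \ge 1$ identifies the right-hand side with $\sum_{i=1}^{d}\binom{n+d-i}{d-i}\big(\sum_{j=1}^{d-i+1}\binom{d-i}{j-1}s_j(M)\big)$.

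Finally, Lemma \ref{L3.1} — which requires only that $\fq$ be standard for $M$ — yields $\mathrm{ir}_M(\fq^{n+1}M) = \mathrm{ir}_{\overline{M}}(\fq^{n+1}\overline{M}) + s_0(M)$ for all $n \ge 1$; combining this with the displayed formula gives the claimed identity for all $n \ge 1$. I expect no serious obstacle: the only points deserving care are the invariance of the invariants $s_i$ ($i\ge1$) and the preservation of standardness under the quotient by $H^0_{\fm}(M)$, together with the observation that Lemma \ref{L3.1} — unlike Lemma \ref{L4.2} — is available only for $n \ge 1$, which is precisely why the conclusion is stated for $n \ge 1$ rather than $n \ge 0$.
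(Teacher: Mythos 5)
Your proposal is correct and follows essentially the same route as the paper: the paper's proof likewise observes that $\fq$ remains standard on $\overline{M}$, applies Theorem \ref{ThB} to $\overline{M}$ (whose extremal hypothesis is exactly your rewriting using $s_0(\overline{M})=0$ and $s_i(\overline{M})=s_i(M)$ for $i\ge 1$), and transfers back via Lemma \ref{L3.1}, which is valid only for $n\ge 1$. Your write-up merely makes explicit the verifications the paper leaves implicit.
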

\begin{proof} If $\fq$ is a standard parameter ideal of $M$, then it is also a standard parameter ideal of $\overline{M}$. By Lemma \ref{L3.1} we have
$\mathrm{ir}_M(\frak q^{n+1}M) = \mathrm{ir}_{\overline{M}}(\frak q^{n+1}\overline{M}) + s_0(M)$ for all $n \ge 1$. The assertion follows from Theorem \ref{ThB}.
\end{proof}

\begin{corollary}
  Let $M$ be a generalized Cohen-Macaulay
module of dimension $d$ and $\frak q$ a parameter ideal of $M$. Suppose that $\frak q$ is a standard parameter ideal of $\overline{M} = M/H^0_{\frak m}(M)$ and
$\mathrm{ir}_{\overline{M}}(\frak q) = \sum_{i=1}^d \binom{d}{i}s_i(M)$. Then
$$\mathrm{ir}_M(\frak q^{n+1}M) = \sum_{i=1}^{d} \binom{n+d-i}{d-i} \big( \sum_{j=1}^{d-i+1} \binom{d-i}{j-1} s_j(M)\big) + s_0(M)$$
for all $n \gg 0$.
\end{corollary}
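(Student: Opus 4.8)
The plan is to deduce this from Theorem~\ref{ThB} applied to $\overline{M}$, combined with a version of Lemma~\ref{L3.1} that holds for $n\gg 0$ but only uses that $\frak q$ is standard on $\overline{M}$. One may assume $d>0$, since for $d=0$ the right-hand side reduces to $s_0(M)=\dim_{\frak k}\mathrm{Soc}(M)=\mathrm{ir}_M(0)$ and there is nothing to prove.

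First I would set up $\overline{M}$: it is generalized Cohen-Macaulay of dimension $d$ with $H^0_{\frak m}(\overline{M})=0$, and the exact sequence $0\to H^0_{\frak m}(M)\to M\to\overline{M}\to 0$ gives $s_i(\overline{M})=s_i(M)$ for $i\ge 1$ while $s_0(\overline{M})=0$. Thus the hypothesis $\mathrm{ir}_{\overline{M}}(\frak q)=\sum_{i=1}^d\binom{d}{i}s_i(M)$ is exactly $\mathrm{ir}_{\overline{M}}(\frak q)=\sum_{i=0}^d\binom{d}{i}s_i(\overline{M})$. Since $\frak q$ is standard on $\overline{M}$, Theorem~\ref{ThB} applies to $\overline{M}$ and yields
$$\mathrm{ir}_{\overline{M}}(\frak q^{n+1}\overline{M})=\sum_{i=1}^{d}\binom{n+d-i}{d-i}\big(\sum_{j=1}^{d-i+1}\binom{d-i}{j-1}s_j(M)\big)\quad\text{for all }n\ge 0 .$$
It then remains only to prove $\mathrm{ir}_M(\frak q^{n+1}M)=\mathrm{ir}_{\overline{M}}(\frak q^{n+1}\overline{M})+s_0(M)$ for $n\gg 0$.

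To get this I would first choose $c$ large enough that $\frak q^{n+1}M\cap H^0_{\frak m}(M)=0$ for all $n\ge c$; such a $c$ exists by the Artin--Rees lemma together with the fact that $\frak q^{n}H^0_{\frak m}(M)=0$ for $n\gg 0$ (indeed $\frak q^{n}\subseteq\frak m^{n}$ and $\frak m^{n}$ kills the finite-length module $H^0_{\frak m}(M)$ for $n$ large). Since $x_1$ is a nonzerodivisor on $\overline{M}$, Lemma~\ref{L2.2}(i) applied to $\overline{M}$ gives $\frak q^{n+1}\overline{M}:x_1=\frak q^n\overline{M}$, hence $(\frak q^{n+1}M+H^0_{\frak m}(M)):_M x_1=\frak q^nM+H^0_{\frak m}(M)$ for all $n\ge 1$. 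Now I would run the computation of Lemma~\ref{L3.1} verbatim with this colon identity in place of the one used there; the only extra input needed is $\frak q^nM\cap(\frak q^{n+1}M+H^0_{\frak m}(M))=\frak q^{n+1}M+(\frak q^nM\cap H^0_{\frak m}(M))=\frak q^{n+1}M$ for $n\ge c$, which holds by the modular law and the choice of $c$. This produces $(\frak q^{n+1}M+H^0_{\frak m}(M)):_M\frak m=(\frak q^{n+1}M:_M\frak m)+H^0_{\frak m}(M)$ for $n\ge c$. Because $\frak q^{n+1}M\cap H^0_{\frak m}(M)=0$, the sequence $0\to H^0_{\frak m}(M)\to M/\frak q^{n+1}M\to\overline{M}/\frak q^{n+1}\overline{M}\to 0$ is exact, and applying $\mathrm{Hom}_R(R/\frak m,-)$ and using the colon identity (exactly as in Lemma~\ref{L3.1}) gives the short exact sequence
$$0\to(0:_{H^0_{\frak m}(M)}\frak m)\to(\frak q^{n+1}M:_M\frak m)/\frak q^{n+1}M\to(\frak q^{n+1}\overline{M}:_{\overline{M}}\frak m)/\frak q^{n+1}\overline{M}\to 0$$
for $n\ge c$, i.e. $\mathrm{ir}_M(\frak q^{n+1}M)=\mathrm{ir}_{\overline{M}}(\frak q^{n+1}\overline{M})+s_0(M)$. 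Combining the two displays finishes the proof.

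The main obstacle, and the reason the conclusion is stated only for $n\gg 0$, is precisely the step $\frak q^nM\cap H^0_{\frak m}(M)\subseteq\frak q^{n+1}M$: when $\frak q$ is standard on $M$ this is available for every $n\ge 1$ (as inside the proof of Lemma~\ref{L3.1}), but under the present weaker hypothesis it can only be secured for large $n$ via an Artin--Rees bound. Everything else — the colon-ideal bookkeeping and the $\mathrm{Hom}_R(R/\frak m,-)$ diagram chase — is identical to Lemma~\ref{L3.1} once rephrased through Lemma~\ref{L2.2}(i) for $\overline{M}$, so it should present no real difficulty.
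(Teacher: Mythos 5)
Your proposal is correct and follows the paper's own route: reduce to $\overline{M}$ by establishing $\mathrm{ir}_M(\frak q^{n+1}M)=\mathrm{ir}_{\overline{M}}(\frak q^{n+1}\overline{M})+s_0(M)$ for $n\gg 0$ via the splitting $(\frak q^{n+1}M+H^0_{\frak m}(M)):_M\frak m=(\frak q^{n+1}M:_M\frak m)+H^0_{\frak m}(M)$ and a Lemma \ref{L3.1}-style diagram chase, then apply Theorem \ref{ThB} to $\overline{M}$ using $s_0(\overline{M})=0$ and $s_i(\overline{M})=s_i(M)$ for $i\ge 1$. The only deviation is that the paper gets this splitting for $n\gg 0$ by citing \cite[Lemma 2.4]{CT08}, while you derive it directly from the Artin--Rees lemma together with Lemma \ref{L2.2}(i) applied to $\overline{M}$, which is an acceptable self-contained substitute.
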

\begin{proof}
 By \cite[Lemma 2.4]{CT08} we have $(\frak q^nM + H^0_{\frak m}(M)):_M \frak m = (\frak q^nM:_M \frak m) + H^0_{\frak m}(M)$ for all $n \gg 0$. Then similar to Lemma \ref{L3.1} we have $\mathrm{ir}_M(\frak q^{n+1}M) = \mathrm{ir}_{\overline{M}}(\frak q^{n+1}\overline{M}) + s_0(M)$ for all $n \gg 0$. Now the assertion follows from Theorem \ref{ThB}.
\end{proof}

\begin{example}\rm Let $S = K[[X, Y , Z, W]]$ be the formal power series ring over a field $K$. We look at the following typical Buchsbaum ring
$$R= K[[X, Y , Z, W]]/(X, Y)\cap( Z, W).$$
Let $x$, $y$, $z$, and $w$ denote image of $X$, $Y$, $Z$, and $W$ in $R$, respectively. It is easy to see that $R$ is a Buchsbaum ring with $\dim R=2$ and $\depth R=1$. More precisely, by the exact sequence
	$$0\to R = S/(X, Y)\cap( Z, W)\to S/(X,Y)\oplus S/(Z,W) \to S/(X,Y,Z,W) = K\to 0 $$
one can check that $s_0(R) = 0, s_1(R)=1, s_2(R)=2$. Since $R$ is Buchsbaum, every parameter ideal $\fq$ of $R$ is standard. By Theorems \ref{ThA}, \ref{ThB} and Remark \ref{R3.4} we have
\begin{eqnarray*}
\mathrm{ir}_R(\fq^{n+1}) &=& \sum_{i=1}^{2} \binom{n+2-i}{2-i} \big( \sum_{j=1}^{2-i+1} \binom{2-i}{j-1} s_j(R)\big) + s_0(R)\\
&=& ( s_1(R) + s_2(R))(n+1) + (s_0(R) + s_1(R)).\\
&=& 3(n+1) + 1.
\end{eqnarray*}
if and only if
 $$\mathrm{ir}_R(\fq)=\sum_{i=0}^2 \binom{2}{i}s_i(R) = 4.$$
 By Lemma \ref{L4.1} (ii) this is the case if $\fq \subseteq \fm^2$, where $\fm = (x,y,z,w)$. We product an example with $\mathrm{ir}_R(\fq^{n+1}) < 3(n+1)+1$ for all $n \ge 0$. Let $a=x-z$, $b=y-w$, and $\fq_0=(a,b)$. It is easy to see that
$$R/\fq_0 \cong k[[X,Y]]/(X,Y)^2.$$
Therefore $\fq_0:\frak m=\frak m$ and $\mathrm{ir}_R(\fq_0) = 2 <4$. Thus
$$\mathrm{ir}_R(\fq_0^{n+1}) < 3(n+1)+1$$
for all $n \ge 0$.
\end{example}

 \section{Hilbert polynomials of socle ideals.}
 In this section we assume that $(R, \frak m)$ is a generalized Cohen-Macaulay local ring of dimension $d$. Let $I$ be an $\frak m$-primary ideal. It is well known that
$$\ell (R/I^{n+1})  = e_0(I) \binom{n+d}{d} - e_1(I) \binom{n+d-1}{d-1} + \cdots + (-1)^d e_d(I)$$
for all $n \gg 0$. These integers $e_i(I)$ are called the Hilbert coefficients of $I$. We will compute explicitly all $e_i(I)$ in the case $I = \frak q:_R \frak m$, where $\frak q$ is a standard parameter ideal of $R$ satisfying $\mathrm{ir}_R(\frak q) = \sum_{i=0}^d \binom{d}{i}s_i(R)$.
\begin{lemma}\label{L4.9} Let $(R, \frak m)$ is a generalized Cohen-Macaulay local ring of dimension $d$ such that $R$ is not regular. Assume that $\frak q$ is a standard parameter ideal of $R$ satisfying $\mathrm{ir}_R(\frak q) = \sum_{i=0}^d \binom{d}{i}s_i(R)$. Put $I = \frak q:_R \frak m$. Then for all $n \ge 1$ we have
  $$\ell(I^{n+1}/\frak q^{n+1}) = \sum_{i=1}^{d} \binom{n+d-i}{d-i} \big( \sum_{j=1}^{d-i+1} \binom{d-i}{j-1} s_j(R)\big).$$
\end{lemma}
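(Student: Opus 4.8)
The plan is to compute $\ell(I^{n+1}/\fq^{n+1})$ by relating it to the index of reducibility $\mathrm{ir}_R(\fq^{n+1})$ already determined in Theorem \ref{ThB}, together with the structural identity $\fq^{n+1}:_R\fm = \fq^n(\fq:_R\fm) + (0:_R\fm)$ from Corollary \ref{C4.7} (equivalently Lemma \ref{L4.5}). First I would observe that $R$ not regular forces $\depth R = 0$ or, in any case, $(0:_R\fm) = \mathrm{Soc}(R) \ne 0$ only when $\depth R = 0$; more to the point, since $I = \fq:_R\fm \ne \fq$ (as $R$ is not regular, $\fq$ is not irreducible — or rather $\mathrm{ir}_R(\fq)$ is large), the quotient $I^{n+1}/\fq^{n+1}$ is nonzero and has finite length. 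The key idea is that powers of $I$ do not grow faster than powers of $\fq$ in a controlled way: I claim $I^{n+1} \subseteq \fq^n I + (0:_R\fm) \subseteq \fq^{n+1}:_R\fm$, and in fact one should prove $I^{n+1} = \fq^n I + \fq^{n+1}$, or the slightly weaker statement that $I^{n+1}$ and $\fq^n(\fq:_R\fm) + (0:_R\fm) = \fq^{n+1}:_R\fm$ have the same length modulo $\fq^{n+1}$.

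Concretely, the main steps would be: (1) Show $\fq^{n+1} \subseteq I^{n+1} \subseteq \fq^{n+1}:_R\fm$ for all $n\ge 1$. The left inclusion is clear since $\fq\subseteq I$. For the right inclusion, write $I = \fq:_R\fm$ so $\fm I \subseteq \fq$, hence $\fm I^{n+1} \subseteq \fq I^n \subseteq \fq I \cdot I^{n-1}$; iterating, $\fm I^{n+1}\subseteq \fq^{n+1}$, which is exactly $I^{n+1}\subseteq \fq^{n+1}:_R\fm$. (2) Conclude $\fq^{n+1} \subseteq I^{n+1} \subseteq \fq^{n+1}:_R\fm$, so that $\ell(I^{n+1}/\fq^{n+1}) \le \ell((\fq^{n+1}:_R\fm)/\fq^{n+1}) = \mathrm{ir}_R(\fq^{n+1})$, which by Theorem \ref{ThB} equals $\sum_{i=1}^d \binom{n+d-i}{d-i}\big(\sum_{j=1}^{d-i+1}\binom{d-i}{j-1}s_j(R)\big) + s_0(R)$. (3) For the reverse inequality, the decisive point is to show $s_0(R)\subseteq I^{n+1}$ is \emph{not} forced; rather, one shows $(0:_R\fm)\cap \fq^{n+1} = 0$ and that $I^{n+1}$ contains $\fq^n I$ but the contribution of $(0:_R\fm)$ is split off. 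Using Corollary \ref{C4.7}, $\fq^{n+1}:_R\fm = \fq^n I + (0:_R\fm)$, and $(0:_R\fm)\cap \fq^{n+1} = 0$ while $(0:_R\fm)\cap I^{n+1}$ should be shown to be $0$ as well (since $\fm I^{n+1}\subseteq \fq^{n+1}$ and one can arrange $(0:_R\fm)\cap(\fq^n I) = 0$). Then $\ell((\fq^{n+1}:_R\fm)/\fq^{n+1}) = \ell(\fq^n I/\fq^{n+1}) + s_0(R)$ and $\ell(I^{n+1}/\fq^{n+1}) = \ell(\fq^n I/\fq^{n+1})$, giving the claimed formula with the $s_0(R)$ term removed.

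The main obstacle I anticipate is step (3): carefully verifying that $I^{n+1} = \fq^n I$ modulo $\fq^{n+1}$ — i.e. that no ``extra'' socle elements of $R$ slip into $I^{n+1}$ — and that the splitting $\fq^{n+1}:_R\fm = \fq^n I \oplus (0:_R\fm)$ holds modulo $\fq^{n+1}$ rather than merely as a sum. This likely requires an induction on $n$ paralleling the induction in Lemma \ref{L4.5}, using $\fq^{n+1}:x_i = \fq^n R + H^0_\fm(R)$ from Lemma \ref{L2.2} together with the hypothesis that $R$ is not regular (which guarantees $I\ne \fq$ and makes the length bookkeeping nondegenerate). Once the identity $\ell(I^{n+1}/\fq^{n+1}) = \mathrm{ir}_R(\fq^{n+1}) - s_0(R)$ is established, the formula follows immediately by substituting the expression from Theorem \ref{ThB}.
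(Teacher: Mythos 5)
Your overall strategy is the right one (reduce to $\ell(I^{n+1}/\fq^{n+1}) = \mathrm{ir}_R(\fq^{n+1}) - s_0(R)$ and then quote Theorem \ref{ThB}), but there is a genuine gap at the decisive point: you never establish that $I^{n+1} = \fq^n I$ (equivalently $I^2 = \fq I$). The paper gets this from \cite[Theorem 1.2]{CT08}, which is exactly where the hypothesis that $R$ is not regular is used; it is a substantive theorem (a generalized Cohen--Macaulay analogue of the Goto--Sakurai equality $I^2=QI$), not something that follows from the bookkeeping in Lemma \ref{L4.5}. Your step (1) tries to bypass it, but the argument there is incorrect: from $\fm I \subseteq \fq$ you get $\fm I^{n+1} \subseteq \fq I^{n}$, and ``iterating'' only multiplies by further copies of $\fm$, i.e.\ it yields $(\fm I)^{n+1} \subseteq \fq^{n+1}$, hence $\fm^{n+1} I^{n+1} \subseteq \fq^{n+1}$, which is much weaker than $\fm I^{n+1} \subseteq \fq^{n+1}$. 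Passing from $\fq I^n$ to $\fq^{n+1}$ would require $I^n \subseteq \fq^n$, which is false, so the containment $I^{n+1} \subseteq \fq^{n+1}:_R \fm$ does not follow without first knowing $I^{n+1} = \fq^n I$. Likewise your step (3), where you hope to prove $I^{n+1} \equiv \fq^n I$ modulo $\fq^{n+1}$ by ``an induction paralleling Lemma \ref{L4.5},'' is precisely the missing content and is not supplied.

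Once $I^{n+1} = \fq^n I$ is in hand, the rest of your plan does collapse to the paper's proof: Corollary \ref{C4.7} gives $\fq^{n+1}:_R\fm = \fq^n I + (0:_R\fm)$, and since $\fq$ is standard, $H^0_{\fm}(R)\cap \fq R = 0$ by Lemma \ref{L2.2}(ii), so $(0:_R\fm)\cap \fq^n I = 0$ for $n\ge 1$ and $\ell\big((\fq^{n+1}:_R\fm)/I^{n+1}\big) = s_0(R)$; this part of your outline is fine (and it is cleaner than your tentative splitting discussion). Incidentally, the parenthetical remark that non-regularity ``forces $\depth R=0$'' is wrong and should be dropped; the role of non-regularity is solely to guarantee the equality $I^2=\fq I$ from \cite{CT08}. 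As written, the proposal is incomplete without that key lemma or an actual proof of it.
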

\begin{proof}
 By \cite[Theorem 1.2]{CT08} we have $I^2 = \frak qI$. Thus $I^{n+1} = \frak q^n (\frak q:_R \frak m)$. By Corollary \ref{C4.7} we have $\ell ((\frak q^{n+1}:_R \frak m)/I^{n+1}) = s_0$ for all $n \ge 1$. So
 $$\ell(I^{n+1}/\frak q^{n+1}) = \mathrm{ir}_R(\frak q^{n+1}) - s_0(R)$$
 for all $n \ge 1$. The assertion follows from Theorem \ref{ThB}.
\end{proof}
\begin{theorem}\label{T4.10}
  Let $(R, \frak m)$ is a generalized Cohen-Macaulay local ring of dimension $d>0$ such that $R$ is not regular. Let $\frak q$ be a standard parameter ideal of $R$ satisfying $\mathrm{ir}_R(\frak q) = \sum_{i=0}^d \binom{d}{i}s_i(R)$. Put $I = \frak q:_R \frak m$, and $h_j(R) = \ell(H^j_{\frak m}(R))$ for all $j = 0, ..., d-1$. Then we have $e_0(I) = e_0(\fq)$ and
  $$e_i(I) = (-1)^i \big(\sum_{j =1 }^{d-i} \binom{d-i-1}{j-1}h_j(R) - \sum_{j=1}^{d-i+1} \binom{d-i}{j-1} s_j(R) \big)$$
  for all $i = 1, ...,d-1$, and $e_d(I) = (-1)^d(h_0 - s_1)$.
\end{theorem}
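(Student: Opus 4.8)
The plan is to extract the coefficients $e_i(I)$ from the polynomial identity
$$\ell(R/I^{n+1})=\ell(R/\fq^{n+1})-\ell(I^{n+1}/\fq^{n+1}),$$
valid for every $n$, by expanding the right-hand side in the binomial basis $\binom{n+d-i}{d-i}$, $i=0,\dots,d$, and matching it against $\ell(R/I^{n+1})=\sum_{i=0}^{d}(-1)^ie_i(I)\binom{n+d-i}{d-i}$ for $n\gg 0$. The second term on the right is already supplied by Lemma \ref{L4.9}: for $n\ge 1$, and in particular for $n\gg 0$, it equals $\sum_{i=1}^{d}\binom{n+d-i}{d-i}\big(\sum_{j=1}^{d-i+1}\binom{d-i}{j-1}s_j(R)\big)$, which is already written in the required basis, with vanishing degree-$d$ coefficient and with $i$-th coefficient equal to $\sum_{j=1}^{d-i+1}\binom{d-i}{j-1}s_j(R)$ for $1\le i\le d$. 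Since this term has degree at most $d-1$, matching top-degree terms already gives $e_0(I)=e_0(\fq)$.

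The remaining ingredient is the Hilbert function of the standard parameter ideal $\fq$ itself. Because $R$ is generalized Cohen-Macaulay and $\fq$ is a standard parameter ideal, for $n\gg 0$ one has the closed expansion
$$\ell(R/\fq^{n+1})=e_0(\fq)\binom{n+d}{d}+\sum_{i=1}^{d-1}\binom{n+d-i}{d-i}\Big(\sum_{j=1}^{d-i}\binom{d-i-1}{j-1}h_j(R)\Big)+h_0(R).$$
I would establish this either by invoking the theory of standard systems of parameters (see, e.g., \cite{T86}), or by rerunning the inductive machinery of Sections 3 and 4: put $R'=R/x_1R$ and $\fq'=(x_2,\dots,x_d)$; use $\fq^{n+1}R:x_1=\fq^nR+H^0_{\fm}(R)$ (Lemma \ref{L2.2}) together with the identity $(\fq^nR+H^0_{\fm}(R)):_R\fm=(\fq^nR:_R\fm)+H^0_{\fm}(R)$ to split $R/\fq^{n+1}R$; telescope the differences $\ell(R/\fq^{k+1})-\ell(R/\fq^{k})$ against $\ell(R'/(\fq')^{k+1}R')$; and collect the resulting binomial sums with the hockey-stick identity, exactly as in the proof of Theorem \ref{ThA}. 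As a check, specializing to $n=0$ recovers the classical equality $\ell(R/\fq)=e_0(\fq)+\sum_{j=0}^{d-1}\binom{d-1}{j}h_j(R)$ for standard parameter ideals.

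Finally I would subtract the two displays and read off the coefficients of $\binom{n+d-i}{d-i}$. The degree-$d$ term repeats $e_0(I)=e_0(\fq)$. For $1\le i\le d-1$ the coefficient of $\binom{n+d-i}{d-i}$ in $\ell(R/I^{n+1})$ is $\sum_{j=1}^{d-i}\binom{d-i-1}{j-1}h_j(R)-\sum_{j=1}^{d-i+1}\binom{d-i}{j-1}s_j(R)$, which is exactly $(-1)^ie_i(I)$ and gives the asserted formula. The constant term corresponds to $i=d$, where both inner binomial sums collapse, to $h_0(R)$ from $\fq$ and to $s_1(R)$ from Lemma \ref{L4.9}, so that $(-1)^de_d(I)=h_0(R)-s_1(R)$, i.e.\ $e_d(I)=(-1)^d(h_0-s_1)$. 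The one genuinely substantial step is pinning down the Hilbert function of the standard parameter ideal $\fq$ in precisely this shape, equivalently computing the $e_i(\fq)$; granting that, the rest is a routine if slightly fiddly matching of binomial coefficients together with bookkeeping of the index shifts between the $\binom{n+d-i}{d-i}$ basis and the sums coming out of Lemma \ref{L4.9}.
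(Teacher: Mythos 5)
Your proposal is correct and follows essentially the same route as the paper: write $\ell(R/I^{n+1})=\ell(R/\fq^{n+1})-\ell(I^{n+1}/\fq^{n+1})$, use Lemma \ref{L4.9} for the second term and the Schenzel--Trung formula for $\ell(R/\fq^{n+1})$ of a standard parameter ideal (the paper cites \cite[Korollar 3.2]{Sc79} and \cite[Theorem 4.2]{T86}, and your displayed expansion agrees with it after unwinding the paper's $\binom{d-i-1}{-1}$ conventions), and then compare binomial coefficients. The only difference is your optional sketch of reproving that formula inductively, which the paper simply quotes; nothing essential is missing.
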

\begin{proof} We have
 $$\ell(R/I^{n+1}) = \ell(R/\frak q^{n+1}) - \ell (I^{n+1}/\frak q^{n+1}).$$
 By \cite[Korollar 3.2]{Sc79} (see also \cite[Theorem 4.2]{T86}) we have
 $$\ell(R/\frak q^{n+1}) = \binom{n+d}{d}e_0(\frak q) + \sum_{i=1}^d  \binom{n+d-i}{d-i} \sum_{j =0 }^{d-i} \binom{d-i-1}{j-1}h_j(R)$$
 for all $n \ge 0$, where $\binom{d-i-1}{-1} = 0$ if $i \neq d$, and $\binom{-1}{-1} = 1$. Combining with Lemma \ref{L4.9} we have
 $$\ell(R/I^{n+1}) = \binom{n+d}{d}e_0(\frak q) + \sum_{i=1}^d  \binom{n+d-i}{d-i} \bigg[ \sum_{j =0 }^{d-i} \binom{d-i-1}{j-1}h_j(R) - \sum_{j=1}^{d-i+1} \binom{d-i}{j-1} s_j(R)\bigg]$$
 for all $n \ge 1$.\\
 Now, the assertion is just comparing coefficients of two polynomials.
\end{proof}
Notice that $s_i(R) = h_i(R)$ for all $i = 0, ..., d-1$, provided $R$ is Buchsbaum. By some combinatorial transformations we have the following. The readers are encouraged to compare this result with \cite[Theorem 1.1]{GJS10}.
\begin{corollary}
  Let $(R, \frak m)$ is a Buchsbaum local ring of dimension $d>1$ such that $R$ is not regular. Suppose that $\frak q$ be a parameter ideal of $R$ satisfying that $\mathrm{ir}_R(\frak q) = \sum_{i=0}^d \binom{d}{i}s_i(R)$. Set $I = \frak q:_R \frak m$. Then we have $e_0(I) = e_0(\fq)$,
  $$e_1(I) = \sum_{j =1 }^{d-1} \binom{d-2}{j-2}h_j(R) + s_d(R)$$
  and
  $$e_i(I) = (-1)^{i+1} \sum_{j=1}^{d-i+1} \binom{d-i-1}{j-2}h_j(R)$$
  for all $i = 2, ...,d-1$, and
  $$e_d(I) = (-1)^d(h_0(R) - h_1(R)).$$
\end{corollary}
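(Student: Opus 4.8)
The plan is to deduce this directly from Theorem~\ref{T4.10} by specializing to the Buchsbaum case and performing a combinatorial simplification. First I would record the two facts that make the specialization legitimate: since $R$ is Buchsbaum, every parameter ideal is standard, so Theorem~\ref{T4.10} applies to $\fq$; and $\frak mH^j_{\frak m}(R)=0$ for $j<d$, so $H^j_{\frak m}(R)=\mathrm{Soc}(H^j_{\frak m}(R))$ and hence $s_j(R)=h_j(R)$ for all $j=0,\ldots,d-1$. Crucially $s_d(R)$ must be left untouched, since $h_d(R)=\ell(H^d_{\frak m}(R))$ is infinite and does not occur in any formula. With these substitutions Theorem~\ref{T4.10} immediately yields $e_0(I)=e_0(\fq)$ and $e_d(I)=(-1)^d(h_0(R)-s_1(R))=(-1)^d(h_0(R)-h_1(R))$, so all that remains is to rewrite
$$e_i(I)=(-1)^i\Big(\sum_{j=1}^{d-i}\binom{d-i-1}{j-1}h_j(R)-\sum_{j=1}^{d-i+1}\binom{d-i}{j-1}s_j(R)\Big),\qquad 1\le i\le d-1,$$
into the claimed shape.

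For $2\le i\le d-1$ (a nonempty range only when $d\ge 3$) one has $d-i+1\le d-1$, so every $s_j(R)$ above is $h_j(R)$; then I would apply Pascal's rule $\binom{d-i}{j-1}=\binom{d-i-1}{j-1}+\binom{d-i-1}{j-2}$ coefficient by coefficient to the sum $\sum_{j=1}^{d-i+1}\binom{d-i}{j-1}h_j(R)$, the $j=d-i+1$ boundary term being consistent because $\binom{d-i-1}{d-i}=0$. This collapses the bracket to $-\sum_{j=1}^{d-i+1}\binom{d-i-1}{j-2}h_j(R)$, giving $e_i(I)=(-1)^{i+1}\sum_{j=1}^{d-i+1}\binom{d-i-1}{j-2}h_j(R)$ (the $j=1$ term being zero since $\binom{d-i-1}{-1}=0$).

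For $i=1$ I would first peel off the single summand $\binom{d-1}{d-1}s_d(R)=s_d(R)$ from the second sum, as this is the only place $s_d(R)$ enters; the remaining terms involve only $s_j(R)=h_j(R)$ for $j\le d-1$, and Pascal's rule $\binom{d-1}{j-1}=\binom{d-2}{j-1}+\binom{d-2}{j-2}$ turns the bracket into $\sum_{j=1}^{d-1}\binom{d-2}{j-2}h_j(R)+s_d(R)$, whence $e_1(I)=\sum_{j=1}^{d-1}\binom{d-2}{j-2}h_j(R)+s_d(R)$. The argument is elementary; I expect the only real pitfall to be the bookkeeping of binomial coefficients at the extreme indices (e.g.\ $\binom{d-2}{-1}=0$, together with the conventions $\binom{d-i-1}{-1}=0$, $\binom{-1}{-1}=1$ used in Theorem~\ref{T4.10}) and the fact that $s_d(R)$ never merges with the $h_j(R)$'s. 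There is no genuine obstacle beyond this, which is why the result was stated as following from "some combinatorial transformations".
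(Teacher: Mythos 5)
Your proposal is correct and follows exactly the route the paper intends: the paper's "proof" is just the remark that $s_j(R)=h_j(R)$ for $j\le d-1$ in the Buchsbaum case followed by "some combinatorial transformations," and your Pascal-rule computation (with the boundary terms $\binom{d-i-1}{d-i}=0$, $\binom{d-i-1}{-1}=0$ and the isolated $s_d(R)$ in the $i=1$ case) is precisely that transformation, carried out correctly.
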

\begin{corollary}
Let $(R, \frak m)$ is a Cohen-Macaulay local ring of dimension $d>0$ such that $R$ is not regular. Let $\frak q$ be a parameter ideal of $R$, and $I = \frak q:_R \frak m$ the socle ideal. Then we have $e_0(I) = e_0(\fq)$ and $e_1(I) = s_d$.
\end{corollary}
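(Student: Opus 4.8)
The plan is to deduce this statement from Theorem~\ref{T4.10} by checking that every Cohen-Macaulay ring automatically fulfils the hypotheses required there. The first step is to record the vanishing of local cohomology: since $\depth R = \dim R = d > 0$, we have $H^i_{\frak m}(R) = 0$ for all $i < d$, hence $h_j(R) = 0$ and $s_j(R) = 0$ for $j = 0, \dots, d-1$. Next I would check that every parameter ideal $\frak q = (x_1,\dots,x_d)$ of $R$ is standard: for each $j$ the quotient $R/(x_1,\dots,x_j)R$ is Cohen-Macaulay of dimension $d-j$ (a system of parameters in a Cohen-Macaulay ring is a regular sequence), so $H^i_{\frak m}\big(R/(x_1,\dots,x_j)R\big) = 0$ whenever $i < d-j$, which is exactly the defining condition $\frak q H^i_{\frak m}(R/(x_1,\dots,x_j)R) = 0$ for $i + j < d$.

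The second step is to verify the extremal equality. By Northcott's theorem (Remark~\ref{R2.7}(i)) one has $\mathrm{ir}_R(\frak q) = s_d(R)$ for every parameter ideal $\frak q$, and since $s_i(R) = 0$ for $i < d$ this coincides with $\sum_{i=0}^d \binom{d}{i} s_i(R)$; thus the hypothesis of Theorem~\ref{T4.10} holds. Moreover, $R$ being non-regular forces $\frak q \neq \frak m$ for every parameter ideal, so $I = \frak q :_R \frak m$ is a proper $\frak m$-primary ideal whose Hilbert coefficients are well defined, and Theorem~\ref{T4.10} applies.

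It then remains to substitute $h_j(R) = s_j(R) = 0$ for $j < d$ into the formulas of that theorem. The identity $e_0(I) = e_0(\frak q)$ is immediate. For $e_1(I)$, when $d \ge 2$ the clause $e_i(I) = (-1)^i\big(\sum_{j=1}^{d-i}\binom{d-i-1}{j-1}h_j(R) - \sum_{j=1}^{d-i+1}\binom{d-i}{j-1}s_j(R)\big)$ at $i=1$ has vanishing first sum, while in the second sum only the term $j = d$ survives, giving $e_1(I) = -\big(0 - \binom{d-1}{d-1}s_d(R)\big) = s_d(R)$; when $d = 1$ the index $1$ is the top index, so instead the clause $e_d(I) = (-1)^d(h_0(R) - s_1(R))$ yields $e_1(I) = -(0 - s_1(R)) = s_1(R) = s_d(R)$. (The same substitution also shows $e_i(I) = 0$ for $2 \le i \le d-1$ and $e_d(I) = 0$ when $d > 1$, though the corollary does not record these.) I do not expect any genuine obstacle: the corollary is simply the Cohen-Macaulay degeneration of Theorem~\ref{T4.10}, and the only point needing a little care is that for $d = 1$ the coefficient $e_1(I)$ is governed by the ``$e_d(I)$'' clause of that theorem rather than by its generic ``$e_i(I)$'' clause, so both cases $d = 1$ and $d \ge 2$ must be handled when reading off $e_1(I)$.
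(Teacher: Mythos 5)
Your proposal is correct and follows exactly the route the paper intends: the corollary is the Cohen--Macaulay specialization of Theorem \ref{T4.10}, and you verify all its hypotheses (generalized Cohen--Macaulayness, standardness of every parameter ideal, and the extremal equality $\mathrm{ir}_R(\frak q)=s_d(R)=\sum_{i=0}^d\binom{d}{i}s_i(R)$ via Northcott) before substituting $h_j(R)=s_j(R)=0$ for $j<d$. Your extra care in reading off $e_1(I)$ from the $e_d$-clause when $d=1$ is a point the paper glosses over, but it is handled correctly.
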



\begin{thebibliography}{99}
\bibitem{CQ11} N.T. Cuong and P.H. Quy, A splitting theorem for local cohomology and its applications, {\it J. Algebra} {\bf 331} (2011),
512--522.

\bibitem{CQT15} N.T. Cuong, P.H. Quy and H.L. Truong, On the index of reducibility in Noetherian modules, {\it J. Pure and Appl. Algebra} {\bf 219} (2015), 4510--4520.

\bibitem{CST78} N.T. Cuong, P. Schenzel and N.V. Trung,
Verallgeminerte Cohen-Macaulay moduln, {\it Math-Nachr.} {\bf 85} (1978),
156--177.

\bibitem{CT08} N.T. Cuong and H.L. Truong, Asymptotic behavior of parameter ideals in generalized Cohen-Macaulay
module, {\it J. Algebra} {\bf 320} (2008), 158--168.

%\bibitem{G82} S. Goto, Approximately Cohen-Macaulay rings, {\it J. Algebra} {\bf 76}
%(1982), 214--225.

\bibitem{GJS10} S. Goto, J. Horiuchi and H. Sakurai, Quasi--socle ideals in Buchsbaum rings, {\it Nagoya Math. J.} {\bf 200} (2010), 93--106

\bibitem{GQ16} S. Goto and P.H. Quy, On the index of reducibility of parameter ideals: the stable and limit values, {\it preprint} (2016).

\bibitem{GS03} S. Goto and H. Sakurai, The equality $I^2 = QI$ in Buchsbaum rings, {\it Rend. Sem. Mat. Univ.
Padova} {\bf 110} (2003), 25--56.

\bibitem{GS84} S. Goto and N. Suzuki, Index of reducibility of parameter ideals in a local ring, {\it J. Algebra} {\bf 87} (1984),
53--88.

\bibitem{N21} E. Noether, Idealtheorie in Ringbereichen, {\it Math. Ann.} {\bf 83} (1921), 24--66.

\bibitem{No57} D.G. Northcott, On irreducible ideals in local rings, {\it J. London Math. Soc.} {\bf 32} (1957), 82--88.

\bibitem{Q12} P.H. Quy, Asymptotic behaviour of good systems of parameters of sequentially
generalized Cohen-Macaulay modules, {\it Kodai Math. J.} {\bf 35} (2012), 576--588.

\bibitem{Q13} P.H. Quy, On the uniform bound of the index of
reducible of parameter ideals of a module whose polynomial type is
at most one, {\it Arch. Math. (Basel)} {\bf 101} (2013), 469--578 .

\bibitem{Sc79} P. Schenzel, Multiplizit¨aten in verallgemeinerten Cohen-Macaulay-Moduln, {\it Math. Nachr.} {\bf 88}
(1979), 295--306.

%\bibitem{Sc98} P. Schenzel, On the use of local cohomology in algebra and
%geometry. In: Elias, J. (ed.) et al., Six lectures on commutative
%algebra. Basel (1998), 241--292.

\bibitem{T86} N.V. Trung, Toward a theory of generalized Cohen-Macaulay modules, {\it Nagoya Math. J.} {\bf 102} (1986), 1--49.

\bibitem{Tr13} H.L. Truong, Index of reducibility of distinguished parameter ideals and sequentially Cohen-Macaulay modules, {\it Proc. Amer. Math. Soc.} {\bf 141} (2013), 1971--1978.
\bibitem{Tr14} H.L. Truong, Index of reducibility of parameter ideals and Cohen-Macaulay rings, {\it J. Algebra} {\bf 415} (2014), 35--49.
\bibitem{Tr15} H.L. Truong, Chern coefficients and Cohen-Macaulay rings,  arXiv:1504.06037.
\bibitem{Tr16} H.L. Truong, Eventually the index of reducibility in Sequentially Cohen-Macaulay modules, preprint.
\bibitem{TY16} H.L. Truong and H.N. Yen,  Hilbert functions of socle ideals, preprint.




\end{thebibliography}
\end{document}